\def\NAT@def@citea{\def\@citea{\NAT@separator}}
\theoremstyle{plain}
\newtheorem{theorem}{Theorem}[section]
\newtheorem{lemma}[theorem]{Lemma}
\theoremstyle{definition}
\newtheorem{definition}[theorem]{Definition}
\theoremstyle{remark}
\newtheorem{remark}{Remark}
\newcommand\qbin[3]{\left[\begin{matrix} #1 \\ #2 \end{matrix} \right]_{#3}}
\begin{document}

\title{On classical orthogonal polynomials related to Hahn's operator}

\author{
\name{R. \'Alvarez-Nodarse\textsuperscript{a}, K. Castillo\textsuperscript{b}, D. Mbouna\textsuperscript{b}, J. Petronilho\textsuperscript{b}\thanks{CONTACT J. Petronilho. Email: josep@mat.uc.pt}}
\affil{\textsuperscript{a}IMUS, Universidad de Sevilla, Departamento de An\'alisis Matem\'atico, Apdo. 1160, E-41080, Sevilla, Spain; \textsuperscript{b}CMUC, Department of Mathematics, University of Coimbra, 3001-501, Coimbra, Portugal}
}

\maketitle

\begin{abstract}
Let ${\bf u}$ be a nonzero linear functional acting on the space of polynomials.
Let $\mathbf{D}_{q,\omega}$ be a Hahn operator acting on the dual space of polynomials.
Suppose that there exist polynomials $\phi$ and $\psi$, with $\deg\phi\leq2$ and $\deg\psi\leq1$,
so that the functional equation
$$
\mathbf{D}_{q,\omega}(\phi {\bf u})=\psi{\bf u}
$$
holds, where the involved operations are defined in a distributional sense. In this note we state necessary and sufficient conditions, involving only the coefficients of $\phi$ and $\psi$, such that ${\bf u}$ is regular, that is,
there exists a sequence of orthogonal polynomials with respect to ${\bf u}$.
A key step in the proof relies upon the fact that a distributional Rodrigues-type formula holds without assuming that ${\bf u}$ is regular.
\end{abstract}

\begin{keywords}
Orthogonal polynomials; moment linear functionals; Hahn's operator; regularity conditions
\end{keywords}

\section{Introduction and main result}

Let $\mathcal{P}$ be the space of all polynomials with complex coefficients and let
$\mathcal{P}_n$ be its subspace of all polynomials of degree less than or equal to $n$ ($n=0,1,\ldots$).
The classical orthogonal polynomial sequences (OPS) of Hermite, Laguerre, Jacobi,
and Bessel, constitute the most studied class of OPS.
In the framework of regular orthogonality, these OPS are defined as orthogonal with respect to a
moment linear functional ${\bf u}:\mathcal{P}\to\mathbb{C}$ such that
there exist two nonzero polynomials $\phi\in\mathcal{P}_2$ and $\psi\in\mathcal{P}_1$
so that ${\bf u}$ satisfies the functional equation
\begin{equation}\label{Dphiu=psiu-veryC}
{\bf D}(\phi{\bf u})=\psi{\bf u}\;.
\end{equation}
Here, $\mathcal{P}^*$ being the (algebraic) dual space of $\mathcal{P}$,
the left multiplication of a functional ${\bf u}\in\mathcal{P}^*$
by a polynomial $\phi\in\mathcal{P}$, and the (distributional)
derivative of ${\bf u}\in\mathcal{P}^*$, are the functionals
$\phi{\bf u}\in\mathcal{P}^*$ and ${\bf D}{\bf u}\in\mathcal{P}^*$
defined, respectively, by
\begin{equation}\label{def-Du-continuous}
\langle \phi{\bf u},f\rangle:=\langle {\bf u},\phi f\rangle\;,\quad
\langle {\bf D}{\bf u},f\rangle:=-\langle {\bf u},f'\rangle\quad (f\in\mathcal{P})\;.
\end{equation}
Hermite and Laguerre functionals (corresponding to the Hermite and Laguerre OPS) appear in (\ref{Dphiu=psiu-veryC}) taking $\phi\equiv\mbox{\rm const.}\neq0$ and $\deg\phi=1$, respectively.
If $\deg\phi=2$ we obtain a Jacobi functional whenever the zeros of $\phi$ are distinct,
and a Bessel functional if $\phi$ has a double zero.
As fundamental references on this issue, we mention Maroni's works \cite{M1991,M1993,M1994}.
For the general theory of OPS (continuous and discrete) we refer the reader to the influential monographs
by Chihara \cite{C1978}, Ismail \cite{I2005}, Nikiforov, Suslov, and Uvarov \cite{NSU1991},
and Koekoek, Lesky, and Swarttouw \cite{KLS2010}.
We also mention here
the recent unpublished class notes \cite{P2018}
(where the emphasis in on the algebraic approach developed by Maroni).
A natural question arises:
if ${\bf u}$ is a nonzero linear functional defined on $\mathcal{P}$ satisfying (\ref{Dphiu=psiu-veryC}), with $\phi\in\mathcal{P}_2$ and $\psi\in\mathcal{P}_1$, and if at least one among $\phi$ and $\psi$ is not the zero polynomial,
to determine necessary and sufficient conditions, involving only the coefficients of $\phi$ and $\psi$, such that ${\bf u}$ is regular (i.e., there exists an OPS with respect to ${\bf u}$).
This question has been answered in the following

\begin{theorem}{\rm \cite[Lemma 2 and Theorem 2]{MP1994}}\label{MP-ITSF-Thm}
Let ${\bf u}\in\mathcal{P}^\prime\setminus\{{\bf 0}\}$. Suppose that
\begin{equation}\label{EqDistC1}
{\bf D}(\phi{\bf u})=\psi{\bf u}\;,
\end{equation}
where $\phi\in\mathcal{P}_2$, $\psi\in\mathcal{P}_1$,
and at least one of $\phi$ and $\psi$ is not the zero polynomial.
Write
$$
\phi(x):=ax^2+bx+c\,,\quad\psi(x):=dx+e\,,\quad d_n:=d+an\,,\quad e_n:=e+bn\;.
$$
($a,b,c,d,e\in\mathbb{C}$; $|a|+|b|+|c|+|d|+|e|\neq0$.)
Then, ${\bf u}$ is regular if and only if
\begin{equation}\label{EqDistC2}
d_n\neq0\,,\quad \phi\Big(-\frac{e_n}{d_{2n}}\Big)\neq0\;,\quad 
\forall n\in\mathbb{N}_0\;.
\end{equation}
Under these conditions, the monic OPS $(P_n)_{n\geq0}$
with respect to ${\bf u}$ satisfies the three-term recurrence relation
\begin{equation}\label{ttrrC1}
P_{n+1}(x)=(x-\beta_n)P_n(x)-\gamma_nP_{n-1}(x) \;,
\end{equation}
with $P_{-1}(x)=0$, being
\begin{equation}\label{EqDistC3}
\beta_n=\frac{ne_{n-1}}{d_{2n-2}}-\frac{(n+1)e_{n}}{d_{2n}}\, ,\quad
\gamma_{n+1}=-\frac{(n+1)d_{n-1}}{d_{2n-1}d_{2n+1}}\phi\Big(-\frac{e_n}{d_{2n}}\Big)
\quad(n=0,1,\ldots)\;.
\end{equation}
In addition, the following (distributional) Rodrigues formula holds
\begin{equation}\label{EqDistRod}
P_n{\bf u}=k_n\,{\bf D}^n\big(\phi^n{\bf u}\big)\;,\quad k_n:=\prod_{j=0}^{n-1}d_{n+j-1}^{-1}\quad(n=0,1,\ldots)\;.
\end{equation}
\end{theorem}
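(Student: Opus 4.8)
The plan is to single out two facts that hold \emph{without} assuming ${\bf u}$ regular, and then deduce the equivalence almost formally from them. Write $\phi(x)=ax^2+bx+c$, $\psi(x)=dx+e$ as in the statement, put $u_n:=\langle{\bf u},x^n\rangle$, and recall the Leibniz rule ${\bf D}(f{\bf v})=f'{\bf v}+f{\bf D}{\bf v}$ ($f\in\mathcal{P}$, ${\bf v}\in\mathcal{P}^*$), which is immediate from (\ref{def-Du-continuous}). Testing (\ref{EqDistC1}) against $x^n$ gives the moment recurrence $d_n u_{n+1}+e_n u_n+nc\,u_{n-1}=0$ for $n\ge0$ (with $u_{-1}:=0$); in particular, if $d_n\ne0$ for every $n$, then $u_0=0$ would propagate to $u_n=0$ for all $n$, contradicting ${\bf u}\ne{\bf 0}$, so in that case $u_0\ne0$.

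For the first unconditional fact I would rewrite (\ref{EqDistC1}) as $\phi\,{\bf D}{\bf u}=(\psi-\phi')\,{\bf u}$ and prove, by induction on $k$ with $n$ fixed ($0\le k\le n$), that ${\bf D}^k(\phi^n{\bf u})=\phi^{n-k}g_k\,{\bf u}$ for polynomials $g_k=g_k^{(n)}$ with $\deg g_k\le k$: applying ${\bf D}$ and the Leibniz rule to $\phi^{n-k}g_k\,{\bf u}$ and using $\phi\,{\bf D}{\bf u}=(\psi-\phi'){\bf u}$ to eliminate the resulting ${\bf D}{\bf u}$ gives the recursion $g_{k+1}=\phi\,g_k'+((n-k-1)\phi'+\psi)g_k$, whence $\deg g_{k+1}\le k+1$. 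Carrying only the coefficient of $x^k$ through this recursion shows that the coefficient of $x^k$ in $g_k^{(n)}$ equals $\prod_{i=0}^{k-1}d_{2n-2-i}$; taking $k=n$ yields, with no hypothesis on ${\bf u}$,
\[
{\bf D}^n\big(\phi^n{\bf u}\big)=\rho_n\,{\bf u},\qquad \rho_n:=g_n^{(n)}\in\mathcal{P}_n,\qquad [x^n]\rho_n=\prod_{j=0}^{n-1}d_{n-1+j}=k_n^{-1}.
\]
The second unconditional fact concerns $M_n:=\langle{\bf u},\phi^n\rangle$: the case $k=1$ above reads ${\bf D}(\phi^{n+1}{\bf u})=\phi^n(d_{2n}x+e_n)\,{\bf u}$, and pairing it with $1$ and with $x$ gives two relations for $\langle{\bf u},\phi^n x\rangle$ and $\langle{\bf u},\phi^n x^2\rangle$; eliminating these via $M_{n+1}=a\langle{\bf u},\phi^n x^2\rangle+b\langle{\bf u},\phi^n x\rangle+c\,M_n$ produces, whenever $d_{2n}\ne0$,
\[
d_{2n+1}\,M_{n+1}=d_{2n}\,\phi\big(-e_n/d_{2n}\big)\,M_n,\qquad\text{so}\qquad M_n=u_0\prod_{j=0}^{n-1}\frac{d_{2j}}{d_{2j+1}}\,\phi\big(-e_j/d_{2j}\big),
\]
together with $\langle{\bf u},\phi^n x\rangle=-(e_n/d_{2n})M_n$, which I keep for later.

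With these in hand the equivalence is short. If (\ref{EqDistC2}) holds, then all $d_m\ne0$, hence $u_0\ne0$ and $[x^n]\rho_n=k_n^{-1}\ne0$, i.e. $\deg\rho_n=n$; set $P_n:=k_n\rho_n$ (monic of degree $n$). For $0\le m<n$ one has $\langle{\bf u},P_n x^m\rangle=k_n(-1)^n\langle\phi^n{\bf u},(x^m)^{(n)}\rangle=0$, while $\langle{\bf u},P_n x^n\rangle=k_n(-1)^n n!\,M_n\ne0$ by the formula for $M_n$, (\ref{EqDistC2}) and $u_0\ne0$; thus $(P_n)_{n\ge0}$ is an OPS for ${\bf u}$, so ${\bf u}$ is regular and (\ref{EqDistRod}) holds by the definition of $P_n$. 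Conversely, if ${\bf u}$ is regular with monic OPS $(P_n)_{n\ge0}$, then $p\,{\bf u}\ne{\bf 0}$ for every nonzero $p$ (since $\langle p\,{\bf u},P_\ell\rangle=\langle{\bf u},p\,P_\ell\rangle\ne0$ for $\ell=\deg p$), so $\rho_n\ne0$; as $\deg\rho_n\le n$ and $\langle{\bf u},\rho_n x^m\rangle=0$ for $m<n$, orthogonality forces $\rho_n=c_n P_n$ with $c_n=[x^n]\rho_n=\prod_{j=0}^{n-1}d_{n-1+j}\ne0$, and letting $n$ range over $\mathbb{N}_0$ gives $d_m\ne0$ for all $m$. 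Then $\langle{\bf u},P_n^2\rangle=\langle P_n{\bf u},P_n\rangle=k_n(-1)^n n!\,M_n$, so from $\gamma_{n+1}=\langle{\bf u},P_{n+1}^2\rangle/\langle{\bf u},P_n^2\rangle\ne0$ and $k_m,d_m\ne0$ we get $M_n\ne0$ for all $n$, i.e. $\phi(-e_n/d_{2n})\ne0$ for all $n$, which is (\ref{EqDistC2}).

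It remains to read off (\ref{EqDistC3}). From $\langle{\bf u},P_n^2\rangle=k_n(-1)^n n!\,M_n$ one gets $\gamma_{n+1}=-(n+1)(k_{n+1}/k_n)(M_{n+1}/M_n)$, and substituting $k_{n+1}/k_n=d_{n-1}/(d_{2n-1}d_{2n})$ and $M_{n+1}/M_n=(d_{2n}/d_{2n+1})\phi(-e_n/d_{2n})$ gives the stated $\gamma_{n+1}$; for $\beta_n$ one uses $\beta_n=\langle{\bf u},xP_n^2\rangle/\langle{\bf u},P_n^2\rangle$, the expansion $(xP_n)^{(n)}=n!((n+1)x+a_{n,n-1})$ (with $a_{n,n-1}$ the subleading coefficient of $P_n$) and $\langle{\bf u},\phi^n x\rangle=-(e_n/d_{2n})M_n$ to obtain $\beta_n=a_{n,n-1}-(n+1)e_n/d_{2n}$; finally $a_{n,n-1}=ne_{n-1}/d_{2n-2}$ follows by carrying the next-to-leading coefficient of $g_k^{(n)}$ through its recursion (equivalently, from $\beta_n=a_{n,n-1}-a_{n+1,n}$ once the closed form of $\beta_n$ is guessed and the telescoping checked). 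The one genuinely delicate point, I expect, is the proof of the identity ${\bf D}^n(\phi^n{\bf u})=\rho_n{\bf u}$ — in particular that the Leibniz rule really closes up to the form $\phi^{n-k}g_k\,{\bf u}$ with the asserted leading coefficient — since this is exactly the structural input that lets the regularity analysis proceed without presupposing regularity; beyond that, everything is linear algebra on the moments $u_n$ and $M_n$.
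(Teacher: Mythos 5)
Your proof is correct, but it follows a genuinely different route from the one the paper takes. Note first that the paper does not actually prove Theorem \ref{MP-ITSF-Thm}: it imports it from \cite{MP1994} and instead proves the $(q,\omega)$-analogue (Theorem \ref{Dqw-main-Thm}), of which the stated result is the limit $\omega=0$, $q\to1$. The paper's strategy there is: prove the Rodrigues formula by induction on $n$ \emph{using the three-term recurrence with $\beta_n,\gamma_n$ prescribed in advance} (Lemma \ref{lemmaRodFunctional}, a long two-step induction); get sufficiency via Favard's theorem plus $\langle{\bf u},P_n\rangle=0$; and get necessity from the fact that the derivative sequences are OPS with respect to the functionals ${\bf u}^{[k]}$ (Lemma \ref{lemma-admissible}), which yields admissibility, followed by a direct computation of $\gamma_1^{[n]}$. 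You instead prove the identity ${\bf D}^n(\phi^n{\bf u})=\rho_n{\bf u}$ by an inner induction on the order of differentiation, exploiting $\phi\,{\bf D}{\bf u}=(\psi-\phi'){\bf u}$ to keep the intermediate functionals in the closed form $\phi^{n-k}g_k{\bf u}$, and then everything — both implications, the Rodrigues formula, and the explicit $\beta_n,\gamma_{n+1}$ — is extracted from the two scalars $[x^n]\rho_n=\prod_{j=0}^{n-1}d_{n-1+j}$ and $M_n=\langle{\bf u},\phi^n\rangle$, with no appeal to Favard and no need to guess the recurrence coefficients beforehand. This is closer in spirit to \cite[Lemma 2]{MP1994} and is arguably cleaner for the continuous case; the price is that your key structural step does not survive the passage to $D_{q,\omega}$ (the ansatz $\phi^{n-k}g_k{\bf u}$ must be replaced by $\Phi(\cdot;k){\bf L}_{q,\omega}^k{\bf u}$ and the commutations with ${\bf L}_{q,\omega}$ spoil the simple recursion for $g_k$), which is precisely why the paper routes its proof through the recurrence relation.

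Two steps are compressed and should be spelled out. In the necessity direction, "$p{\bf u}\neq{\bf 0}$ for nonzero $p$, so $\rho_n\neq0$" needs (i) the injectivity of ${\bf D}$ on $\mathcal{P}^*$ (so that ${\bf D}^n(\phi^n{\bf u})={\bf 0}$ forces $\phi^n{\bf u}={\bf 0}$) and (ii) the observation that $\phi\not\equiv0$ — if $\phi\equiv0$ then $\psi{\bf u}={\bf 0}$ with $\psi\neq0$, contradicting regularity (this is the content of the paper's Lemma \ref{lemma-phipsiNOTzero}). And the identity $a_{n,n-1}=ne_{n-1}/d_{2n-2}$ is only indicated; the subleading-coefficient recursion is $\mu_{k+1}=d_{2n-k-3}\mu_k+e_{n-1}\lambda_k$ (the coefficient $e_{n-1}$ conveniently independent of $k$), which does close, but the computation deserves to be written out. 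Neither point is a gap in the approach.
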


The aim of this contribution is to state a $(q,\omega)-$analogue of Theorem \ref{MP-ITSF-Thm}, replacing in the functional equation (\ref{EqDistC1}) the derivative operator ${\bf D}$ by an appropriate (distributional) Hahn's operator, denoted by ${\bf D}_{q,\omega}$.

Given complex numbers $q$ and $\omega$,
the (ordinary) Hahn's operator $D_{q,\omega}:\mathcal{P}\to\mathcal{P}$ is
\begin{equation}\label{def-Dqw}
D_{q,\omega}f(x):=\frac{f(qx+\omega)-f(x)}{(q-1)x+\omega}\quad (f\in\mathcal{P})\;.
\end{equation}
This operator has been studied by Hahn \cite{H1949}.
Hereafter (when referring to $D_{q,\omega}$) we will assume that $q$ and $\omega$ fulfill the conditions
\begin{equation}\label{q-notexp}
|q-1|+|\omega|\neq0\;,\quad
q\not\in\big\{ 0,{\rm e}^{2ij\pi/n}\;|\;1\leq j\leq n-1\;;\;\;n=2,3,\ldots\big\}\;.
\end{equation}
The first condition in (\ref{q-notexp}) ensures that the right-hand side of (\ref{def-Dqw}) is well defined.
The second one is imposed in order to ensure the existence of OPS in Hahn's sense
(this will be made clear later --- cf. Theorem \ref{Dqw-main-Thm}).
The (ordinary) Hahn's operator $D_{q,\omega}$ induces a (distributional) Hahn's operator
${\bf D}_{q,\omega}:\mathcal{P}^*\to\mathcal{P}^*$, defined by
\begin{equation}\label{def-Dqwu}
\langle{\bf D}_{q,\omega}{\bf u},f\rangle:=
-q^{-1}\langle{\bf u},D_{q,\omega}^*f\rangle\quad({\bf u}\in\mathcal{P}^*\;,\;f\in\mathcal{P})\;,
\end{equation}
where $D_{q,\omega}^*:=D_{1/q,-\omega/q}$.
This definition of ${\bf D}_{q,\omega}$ appears in
Foupouagnigni's PhD thesis \cite[Definition 3.4]{F1998}.
A slightly different one was considered in
H\"acker's PhD thesis \cite[(1.16)]{H1993} (under the supervision of P. Lesky and reviewed for AMS by R. Askey),
where the adopted definition is $\langle{\bf D}_{q,\omega}{\bf u},f\rangle=-\langle{\bf u},D_{q,\omega}f\rangle$,
as it may seem more natural {\it a priori}, taking into account the standard definition appearing in
(\ref{def-Du-continuous}) for the continuous case.
The main results appearing in this thesis can be found also in \cite{H1993a}.
The advantage of (\ref{def-Dqwu}) stems from the facts pointed out in Remark \ref{defDqwboa}.
We also need the operators
$L_{q,\omega}:\mathcal{P}\to\mathcal{P}$ and ${\bf L}_{q,\omega}:\mathcal{P}^*\to\mathcal{P}^*$ given by
$$
L_{q,\omega}f(x):=f(qx+\omega)\;,\quad
\langle {\bf L}_{q,\omega}{\bf u},f\rangle:=\langle {\bf u},L_{q,\omega}^*f\rangle
\quad \big(f\in\mathcal{P}\;,\;{\bf u}\in\mathcal{P}^*\big)\;,
$$
where $L_{q,\omega}^*:=L_{1/q,-\omega/q}$.
Recall that the $q-$bracket is defined by
$$
[\alpha]_q:=
\left\{\begin{array}{cl}
\displaystyle\frac{q^\alpha-1}{q-1}\;,&\mbox{\rm if}\quad q\neq1 \\ [0.75em]
\alpha\;,&\mbox{\rm if}\quad q=1
\end{array}
\right.\quad(\alpha,q\in\mathbb{C})\;.
$$
Note that for each nonnegative integer number $n$, we have $[0]_q:=0$ and $[n]_q\to n$ as $q\to1$.
Note also that (\ref{q-notexp}) ensures that $[n]_q\neq0$ for each $n=1,2,\ldots$.
Our main result is the following:

\begin{theorem}\label{Dqw-main-Thm}
Fix $q,\omega\in\mathbb{C}$ fulfilling $(\ref{q-notexp})$.
Let ${\bf u}\in\mathcal{P}^\prime\setminus\{{\bf 0}\}$. Suppose that
\begin{equation}\label{EqDistC1-Dqw}
{\bf D}_{q,\omega}(\phi{\bf u})=\psi{\bf u}\;,
\end{equation}
where $\phi\in\mathcal{P}_2$, $\psi\in\mathcal{P}_1$,
and at least one of $\phi$ and $\psi$ is not the zero polynomial. Set
\begin{align}
& \phi(x):=ax^2+bx+c\,,\quad\psi(x):=dx+e\,, \label{abcpr1-Dqw} \\
& d_n\equiv d_n(q):=dq^n+a[n]_{q}\,,\quad e_n\equiv e_n(q,\omega):=eq^n+(\omega d_n+b)[n]_q\;. \label{abcpr2-Dqw}
\end{align}
Then, ${\bf u}$ is regular if and only if
\begin{equation}\label{EqDistC2-Dqw}
d_n\neq0\,,\quad \phi\Big(-\frac{e_n}{d_{2n}}\Big)\neq0\;,\quad \forall n\in\mathbb{N}_0\;.
\end{equation}
Under these conditions, the monic OPS $(P_n)_{n\geq0}\equiv(P_n(\cdot;q,\omega))_{n\geq0}$
with respect to ${\bf u}$ satisfies the three-term recurrence relation
\begin{equation}\label{ttrrC1-Dqw}
P_{n+1}(x)=(x-\beta_n)P_n(x)-\gamma_nP_{n-1}(x) \;,
\end{equation}
with $P_{-1}(x)=0$, being
\begin{align}
& \beta_n:=\omega[n]_q+\frac{[n]_qe_{n-1}}{d_{2n-2}}-\frac{[n+1]_qe_{n}}{d_{2n}}\, ,\label{EqDistC3-Dqw1} \\
& \gamma_{n+1}:=-\frac{q^{n}[n+1]_qd_{n-1}}{d_{2n-1}d_{2n+1}}\phi\Big(-\frac{e_n}{d_{2n}}\Big)
\quad(n=0,1,\ldots)\;. \label{EqDistC3-Dqw2}
\end{align}
In addition, the Rodrigues-type formula
\begin{equation}\label{EqDistRod-Dqw}
P_n{\bf u}=k_n\,{\bf D}_{1/q,-\omega/q}^n\Big(\Phi(\cdot;n){\bf L}_{q,\omega}^n{\bf u}\Big)
\quad(n=0,1,\ldots)
\end{equation}
holds in $\mathcal{P}^*$, where
\begin{equation}\label{Rod-knPhi}
k_n:=q^{n(n-3)/2}\prod_{j=0}^{n-1}d_{n+j-1}^{-1}\;,\quad
\Phi(x;n):=\prod_{j=1}^n\phi\big(q^jx+\omega[j]_q\big)\;.
\end{equation}
\end{theorem}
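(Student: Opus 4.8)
The plan is to mimic the strategy used in the continuous case (Theorem~\ref{MP-ITSF-Thm}), transporting it through the operational calculus attached to $\mathbf{D}_{q,\omega}$ and $\mathbf{L}_{q,\omega}$. First I would record the basic algebraic rules for these operators acting on $\mathcal{P}$ and $\mathcal{P}^*$: a Leibniz-type formula
$D_{q,\omega}(fg)=\big(D_{q,\omega}f\big)\,L_{q,\omega}g+f\,D_{q,\omega}g$, the dual product rule
$\mathbf{D}_{q,\omega}(\phi\mathbf{u})=\big(D_{q,\omega}\phi\big)\mathbf{L}_{q,\omega}^{-1}\mathbf{u}+\big(L_{q,\omega}^{-1}\phi\big)\mathbf{D}_{q,\omega}\mathbf{u}$ (or the variant matching the normalization in (\ref{def-Dqwu})), the action on monomials $D_{q,\omega}x^n=[n]_q x^{n-1}+(\text{lower order})$, and the commutation identities between $\mathbf{D}_{q,\omega}$ and $\mathbf{L}_{q,\omega}$. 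From these one computes, for each $n$, the moment-type quantities $\langle\mathbf{u},x^n\rangle$ relations implied by (\ref{EqDistC1-Dqw}): applying the functional equation to the test polynomials $x^n$ and using the definitions yields a first-order recurrence for the moments $u_n:=\langle\mathbf{u},x^n\rangle$ whose coefficients are precisely $d_n$ and $e_n$ as defined in (\ref{abcpr2-Dqw}). This is where the somewhat unusual closed forms for $d_n$ and $e_n$ (with the $\omega$-shifted brackets) must be pinned down; it is essentially bookkeeping but it is the place where the two ``obvious'' definitions of $\mathbf{D}_{q,\omega}$ would give slightly different — yet ultimately equivalent — formulas, so care with the normalization in (\ref{def-Dqwu}) is needed here.

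The core of the argument is the regularity criterion. I would prove it by a Hankel-determinant / induction-on-$n$ scheme, equivalently by constructing the formal monic polynomials $P_n$ satisfying $\langle\mathbf{u},P_n x^k\rangle=0$ for $0\le k<n$ and tracking the ``norms'' $h_n:=\langle\mathbf{u},P_n^2\rangle$, with regularity equivalent to $h_n\neq0$ for all $n$. The key observation — the one the abstract advertises — is that the Rodrigues-type representation (\ref{EqDistRod-Dqw}) can be established \emph{without} assuming regularity: one checks directly, using the Leibniz/commutation rules above and the hypothesis (\ref{EqDistC1-Dqw}), that the functional $k_n\,\mathbf{D}_{1/q,-\omega/q}^n\big(\Phi(\cdot;n)\mathbf{L}_{q,\omega}^n\mathbf{u}\big)$ equals $R_n\mathbf{u}$ for a monic polynomial $R_n$ of degree $n$, and that this $R_n$ is orthogonal to $\mathcal{P}_{n-1}$ relative to $\mathbf{u}$ (integrate by parts $n$ times: $\langle \mathbf{D}_{1/q,-\omega/q}^n(\Phi\,\mathbf{L}^n\mathbf{u}),\pi\rangle$ involves $D_{q,\omega}^n$ applied to $\pi\in\mathcal{P}_{n-1}$, which vanishes). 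Hence a \emph{formal} OPS always exists, and regularity reduces to the single scalar condition $\langle\mathbf{u},R_n x^n\rangle\neq0$ for every $n$. Evaluating that pairing via the same $n$-fold summation-by-parts, it collapses to $\langle\mathbf{u},\Phi(\cdot;n)\rangle$ up to explicit nonzero factors coming from $[j]_q$ and $D_{q,\omega}^n(x^n)=[n]_q!$-type constants; and a short induction using the moment recurrence turns $\langle\mathbf{u},\Phi(\cdot;n)\rangle$ into a product over $j$ of factors $d_{j-1}$, $d_{2j-1}$, and $\phi\!\big(-e_j/d_{2j}\big)$. This is exactly the assertion that $h_n\neq0$ for all $n$ is equivalent to (\ref{EqDistC2-Dqw}), where the conditions $d_n\neq0$ are also needed merely to make the constants $k_n$ and the recursion well defined.

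Granting regularity, the three-term recurrence coefficients come from Favard's theorem together with the structural identity $\mathbf{D}_{q,\omega}(\phi\mathbf{u})=\psi\mathbf{u}$. Concretely, I would apply the functional equation to $P_n$ and to $P_nx$, use the lowering action of $D_{q,\omega}$ on the monic OPS (that $D_{q,\omega}P_{n+1}$ is, up to the factor $[n+1]_q$, a monic degree-$n$ polynomial, and more precisely express $\phi\,D_{q,\omega}P_n$ in the $P_k$-basis) to get finite linear relations among $\beta_n,\gamma_n$ and the quantities $d_n,e_n$; comparing leading and subleading coefficients then yields (\ref{EqDistC3-Dqw1})--(\ref{EqDistC3-Dqw2}). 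The formula for $\gamma_{n+1}$ is most cleanly obtained from the Rodrigues representation by computing $h_n/h_{n-1}$: since $h_n=k_n\langle\mathbf{u},P_n x^n\cdot(\text{stuff})\rangle$ telescopes, one reads off $\gamma_{n+1}=h_{n+1}/h_n$ in the stated closed form, the factor $q^{n}[n+1]_q d_{n-1}/(d_{2n-1}d_{2n+1})$ being precisely the ratio of consecutive $k$-constants times $[j]_q$-factors. The main obstacle, I expect, is not any single step but keeping the $q$- and $\omega$-bookkeeping consistent — in particular getting the powers of $q$ in $k_n$ (the exponent $n(n-3)/2$) and in $\gamma_{n+1}$ right, since these arise from repeatedly commuting $\mathbf{D}_{1/q,-\omega/q}$ past $\mathbf{L}_{q,\omega}^n$ and from the non-commutativity of dilation with the Hahn difference; a careful lemma on $\mathbf{D}_{q,\omega}^n\mathbf{L}_{q,\omega}^m$ and on $L_{q,\omega}$-compositions of $\phi$ (which is what $\Phi(\cdot;n)$ packages) should isolate that difficulty once and for all.
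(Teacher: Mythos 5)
Your outline of the sufficiency half is essentially sound and close in spirit to what the paper does: the paper also establishes the Rodrigues-type identity without assuming regularity (Lemma \ref{lemmaRodFunctional}) and uses it to check $\langle{\bf u},P_n\rangle=0$ for $n\geq1$. Your variant --- iterate the one-step duality ${\bf D}_{q,\omega}^*\big(Q_n{\bf u}^{[1]}\big)=R_{n+1}{\bf u}$ (the paper's Lemma \ref{lemmaRn+1}) to get ${\bf D}_{1/q,-\omega/q}^n{\bf u}^{[n]}=R_n{\bf u}$ with $\deg R_n=n$ under admissibility, then evaluate $\langle{\bf u},R_nx^n\rangle$ by $n$-fold summation by parts and a telescoping computation of $\langle{\bf u}^{[n]},1\rangle=q^{-1}\langle{\bf u}^{[n-1]},\phi\rangle=\frac{d_{2n-2}}{d_{2n-1}}\phi\big(-\tfrac{e_{n-1}}{d_{2n-2}}\big)\langle{\bf u}^{[n-1]},1\rangle$ using the functional equations (\ref{feq-unk}) --- is a legitimate and arguably cleaner route to the regularity criterion than the paper's double induction, though identifying the resulting $R_n$ with the specific monic $P_n$ of (\ref{ttrrC1-Dqw})--(\ref{EqDistC3-Dqw2}) and the constant $k_n$ is precisely where the heavy $q$-bookkeeping you defer actually lives.

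There are, however, two genuine gaps. First, your reduction ``regularity $\Leftrightarrow\langle{\bf u},R_nx^n\rangle\neq0$ for all $n$'' only makes sense once you know $\deg R_n=n$, i.e.\ once you already know $d_n\neq0$ for all $n$; if some $d_n=0$ the polynomial $R_{n+1}$ degenerates and your scheme produces no information. So the necessity direction still requires a separate proof that regularity of ${\bf u}$ forces the admissibility condition $d_n\neq0$ --- this is not a formality (note $d_n=dq^n+a[n]_q$ can vanish for some $n\geq1$ even when $d\neq0$), it is the content of the paper's Lemma \ref{lemma-admissible}, proved by pairing ${\bf u}^{[1]}$ against $P_n^{[1]}L_{q,\omega}P_{n+s}$ with $s=\deg\phi$, and the paper explicitly warns that the published proof of this very point in \cite{F1998} is incorrect. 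Your proposal is silent on it. Second, for $\omega\neq0$ the operator $D_{q,\omega}$ does not send $x^n$ to a multiple of $x^{n-1}$ (cf.\ (\ref{Dqwxn})), so testing (\ref{EqDistC1-Dqw}) against the monomials $x^n$ does \emph{not} yield a recurrence for $u_n=\langle{\bf u},x^n\rangle$ with coefficients $d_n,e_n$; one must pass to the Newton-type basis $Y_n(x)=\prod_{j=0}^{n-1}\big(x-\omega[j]_q\big)$ to get (\ref{uDqwxn}) (which is in any case a three-term, not first-order, relation). This matters both for pinning down $d_n,e_n$ as in (\ref{abcpr2-Dqw}) and for the step $\langle{\bf u},1\rangle=0\Rightarrow{\bf u}={\bf 0}$ that your criterion implicitly needs at $n=0$.
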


\begin{remark}
Under the assumption that ${\bf u}$ is regular, the Rodrigues-type formula (\ref{EqDistRod-Dqw}) appears in M\'edem et al. \cite{MA-NM2001} for $\omega=0$ and $q\neq1$, and in Salto \cite{S1995} for $q=1$ and $\omega\neq0$. However, we will prove a more general result (cf. Lemma \ref{lemmaRodFunctional}), showing that (\ref{EqDistRod-Dqw}) holds without assuming the regularity of ${\bf u}$, provided that $(P_n)_n$ is a simple set of polynomials defined by (\ref{ttrrC1-Dqw})--(\ref{EqDistC3-Dqw2}), which we see is well defined requiring only (the admissibility condition) $d_n\neq0$ for each $n=0,1,\ldots$. It is worth mentioning that this (non trivial) fact is known for the continuous case \cite[Lemma 2]{MP1994},
but for the $(q,\omega)-$case we did not found a reference in the available literature.
\end{remark}

\begin{remark}
Taking $\omega=0$ and letting $q\to1$ in Theorem \ref{Dqw-main-Thm} yields Theorem \ref{MP-ITSF-Thm}.
We highly that H\"acker \cite[Theorem 1.4 (p. 26)]{H1993} gave regularity conditions different from (\ref{EqDistC2-Dqw}),
considering a definition of ${\bf D}_{q,\omega}$ in the sense discussed above.
H\"acker's approach is very different from ours,
since his results are derived from the analysis of a discrete Sturm-Liouville problem,
while our proof of Theorem \ref{Dqw-main-Thm}
uses appropriate modifications of some ideas appearing in \cite{MP1994},
based in the McS thesis \cite{P1993} and obtained independently of H\"acker's results.
Indeed, our approach is supported on the algebraic theory of orthogonal polynomials developed by Maroni \cite{M1991}.
\end{remark}

\begin{remark}\label{defDqwboa}
As we mentioned before, there is some advantages in defining ${\bf D}_{q,\omega}$
as in (\ref{def-Dqwu}). For instance, in the regularity condition (\ref{EqDistC2-Dqw}) as well as
in the expression for $\gamma_{n}$ given by (\ref{EqDistC3-Dqw2}),
the polynomial appearing therein is precisely $\phi$.
The same does not holds in the formulas given in H\"acker thesis 
(cf. \cite[Section 2.4]{H1993}).
\end{remark}

In the next section some background needed throughout this work is introduced.
Section 3 is devoted to the proof of Theorem \ref{Dqw-main-Thm}.%

\section{Basic results and notations}

We start by recasting some basic definitions.
For $a\in\mathbb{C}\setminus\{0\}$ and $b\in\mathbb{C}$,
the dilation operator $h_a:\mathcal{P}\to\mathcal{P}$ and
the translation operator $\tau_b:\mathcal{P}\to\mathcal{P}$ are defined by
\begin{equation}\label{hataub}
h_af(x):=f(ax)\;,\quad \tau_bf(x):=f(x-b)\quad(f\in\mathcal{P})\,.
\end{equation}
Note that if $q=1$ in (\ref{def-Dqw}) then, setting $\triangle_\omega f(x):=f(x+\omega)-f(x)$, we have
$$
D_{1,\omega}=\frac{\triangle_\omega}{\omega}\;,\quad
$$
while if $q\neq1$ then, setting $D_q:=D_{q,0}$, we have 
\begin{equation}\label{DqDqw}
D_{q,\omega}=\tau_{\omega_0}D_{q}\tau_{-\omega_0}\;,\quad \omega_0:=\omega/(1-q)
\end{equation}
(see e.g. \cite[(7.1)]{CP2015}).
Thus, if $q\neq1$ then there is no loss of generality by assuming $\omega=0$, a fact remarked by Hahn himself \cite{H1949}.
Despite this, it seems to us preferable to present the theory for general $(q,\omega)$ fulfilling (\ref{q-notexp}),
in order to emphasize that there is no significant simplification by presenting it for specific $q$ or $\omega$
and, more interesting, there is no need to study separately the case $q=1$ and $q\neq1$.
As a matter of fact, the general formulas appearing in Theorem \ref{Dqw-main-Thm}
allow us to emphasize a complete similarity with the corresponding ones for the continuous case
(appearing in Theorem \ref{MP-ITSF-Thm}).

Next we introduce some basic definitions and useful notations.

\begin{definition}\label{def-Fqwu-Foup}
Let $q\in\mathbb{C}\setminus\{0\}$ and $\omega\in\mathbb{C}$.
\begin{enumerate}
\item[{\rm (i)}]
The operator $L_{q,\omega}:\mathcal{P}\to\mathcal{P}$ is defined by
$$
L_{q,\omega}:=h_q\circ\tau_{-\omega}\;.
$$
\item[{\rm (ii)}]
The operators $L_{q,\omega}^*:\mathcal{P}\to\mathcal{P}$ and $D_{q,\omega}^*:\mathcal{P}\to\mathcal{P}$ are defined by
$$
L_{q,\omega}^*:=h_{1/q}\circ\tau_{\omega/q}=L_{1/q,-\omega/q}\;,\quad
D_{q,\omega}^*:=D_{1/q,-\omega/q}\;.
$$
\item[{\rm (iii)}]
The operators ${\bf D}_{q,\omega}:\mathcal{P}^*\to\mathcal{P}^*$ and 
${\bf L}_{q,\omega}:\mathcal{P}^*\to\mathcal{P}^*$ are defined by 
$$
\langle{\bf D}_{q,\omega}{\bf u},f\rangle:=-q^{-1}\langle{\bf u},D_{q,\omega}^*f\rangle\;,\quad
\langle{\bf L}_{q,\omega}{\bf u},f\rangle:=q^{-1}\langle{\bf u},L_{q,\omega}^*f\rangle\quad
({\bf u}\in\mathcal{P}^*\;,\;f\in\mathcal{P})\;.
$$
\item[{\rm (iv)}]
The operators ${\bf D}_{q,\omega}^*:\mathcal{P}^*\to\mathcal{P}^*$ and
${\bf L}_{q,\omega}^*:\mathcal{P}^*\to\mathcal{P}^*$ are defined by
$$
{\bf D}_{q,\omega}^*:={\bf D}_{1/q,-\omega/q}\;,\quad
{\bf L}_{q,\omega}^*:={\bf L}_{1/q,-\omega/q}\;.
$$
\end{enumerate}
\end{definition}

\begin{remark}
As far as we know, the definitions appearing in (i), (ii), and (iv) were given in \cite{H1993},
while the ones appearing in (iii) were proposed in \cite{F1998}.
\end{remark}

Note that $L_{q,\omega}$ and $L_{q,\omega}^*$ are linear operators, given explicitly by
$$
L_{q,\omega}f(x)=f(qx+\omega)\;,\quad L_{q,\omega}^*f(x)=f\Big(\frac{x-\omega}{q}\Big)\qquad(f\in\mathcal{P})\;.
$$
In bellow we summarize some useful properties involving the above operators, where
${\bf u}\in\mathcal{P}^*$ and $f,g\in\mathcal{P}$
(see \cite{H1993,F1998,KLS2010}):
\begin{align}
&L_{q,\omega}^* L_{q,\omega} = L_{q,\omega}L_{q,\omega}^*=I\;;\quad
{\bf L}_{q,\omega}^* {\bf L}_{q,\omega} = {\bf L}_{q,\omega}{\bf L}_{q,\omega}^*=\mbox{\bf I}\;; \label{P2} \\
&L_{q,\omega}^{-1}=L_{q,\omega}^*\;; \quad
{\bf L}_{q,\omega}^{-1}={\bf L}_{q,\omega}^*\;; \label{P2a} \\
&L_{q,\omega}^nf(x) =f\big(q^nx+\omega[n]_q\big)\quad (n=0,\pm1,\pm2,\ldots)\;;  \label{P1}\\
&D_{q,\omega}^* D_{q,\omega} =q D_{q,\omega} D_{q,\omega}^*\;; 
\; D_{q,\omega} L_{q,\omega}^* =q^{-1} L_{q,\omega}^* D_{q,\omega} \;;\;
D_{q,\omega} L_{q,\omega} =q L_{q,\omega} D_{q,\omega} \;; \label{P3} \\
&D_{q,\omega}^* L_{q,\omega} =q D_{q,\omega} \;;\quad
{\bf D}_{q,\omega}^* {\bf L}_{q,\omega} =q {\bf D}_{q,\omega}\;; \label{P4} \\
&L_{q,\omega}(fg) = (L_{q,\omega}f)(L_{q,\omega}g)\;;\quad
{\bf L}_{q,\omega}(f{\bf u})=L_{q,\omega}f\,{\bf L}_{q,\omega}{\bf u}\label{P4a}\;; \\
&D_{q,\omega}(fg) = (D_{q,\omega}f)(L_{q,\omega}g)  + f D_{q,\omega}g  \label{P5} \\
&{\bf D}_{q,\omega}(f{\bf u}) = D_{q,\omega}f\,{\bf L}_{q,\omega}{\bf u} + f {\bf D}_{q,\omega}{\bf u}
=D_{q,\omega}f\,{\bf u}+L_{q,\omega}f\,{\bf D}_{q,\omega}{\bf u}\,. \label{P6}
\end{align}
(In (\ref{P2}), $I$ and ${\bf I}$ denote the identity operators
in $\mathcal{P}$ and in $\mathcal{P}^*$, respectively.)
We also point out the following analogue of Leibnitz formula:
\begin{equation}\label{Leibnitz-Dqwnfg}
D_{q,\omega}^n(fg)
=\sum_{k=0}^n \qbin{n}{k}{q}
L_{q,\omega}^k\big(D_{q,\omega}^{n-k}f\big)\cdot D_{q,\omega}^kg
\quad(f,g\in\mathcal{P})\;,
\end{equation}
where, defining the $q-$factorials as $[0]_q!:=1$ and $[n]_q!:=[1]_q[2]_q\cdots [n]_q$ for $n\in\mathbb{N}$,
the $q-$binomial number is given by
$$
\qbin{n}{k}{q}:=\frac{[n]_q!}{[k]_q![n-k]_q!}\quad (n,k\in\mathbb{N}_0\;;\;k\leq n)\;.
$$
Note that (\ref{Leibnitz-Dqwnfg}) can be easily deduced from the well known Leibnitz formula
for the operator $D_q$ (see e.g. \cite[Exercise 12.1]{I2005}
or \cite[(1.15.6)]{KLS2010}) and using the relation (\ref{DqDqw}) between $D_q$ and $D_{q,\omega}$.
There is also a functional version of the Leibnitz formula:
\begin{equation}\label{Leibnitz-Dqwnfu}
{\bf D}_{q,\omega}^n(f{\bf u})
=\sum_{k=0}^n \qbin{n}{k}{q} 
L_{q,\omega}^k\big(D_{q,\omega}^{n-k}f\big)\, {\bf D}_{q,\omega}^k{\bf u}
\quad(f\in\mathcal{P}\;,\;{\bf u}\in\mathcal{P}^*)\;.
\end{equation}
A basic property of Hahn's operator relies upon the fact it maps a polynomial of degree $n$ into one of degree $n-1$.
Indeed, since
$D_{q,\omega}x^{n}=\sum_{k=0}^{n-1}(qx+\omega)^kx^{n-1-k}$,
applying the binomial formula to $(qx+\omega)^k$, we obtain
\begin{equation}\label{Dqwxn}
D_{q,\omega}x^{n}=\sum_{k=0}^{n-1}[n,k]_{q,\omega}x^{n-1-k}=[n]_qx^{n-1}+(\mbox{\rm lower degree terms})\;,
\end{equation}
where the number $[n,k]_{q,\omega}$ is defined by
$$
[n,k]_{q,\omega}:=\omega^k\sum_{j=0}^{n-1-k}\binom{k+j}{j}q^j\quad (n,k=0,1,\ldots)\;.
$$
We adopt the convention that an empty sum equals zero, 
hence
$$
[n,k]_{q,\omega}=0\quad\mbox{\rm if}\quad n\leq k\;.
$$
We also point out the following useful representations:
$$
[n,k]_{q,\omega}=\frac{\omega^k}{k!}\frac{{\rm d}^k}{{\rm d}q^k}\left(\sum_{j=k}^{n-1}q^j\right)
=\frac{\omega^k}{k!}\frac{{\rm d}^k}{{\rm d}q^k}\left(\frac{q^{n}-q^k}{q-1}\right) 
=\frac{\omega^k}{k!}\frac{{\rm d}^k}{{\rm d}q^k}\Big([n]_q-[k]_q\Big)\;. 
$$
In particular, for $k\in\{0,1,2\}$, we compute
\begin{align*}
&[n,0]_{q,\omega} = [n]_q\;, \\
&[n,1]_{q,\omega} = \big(n[n-1]_q-(n-1)[n]_q\big)\omega_0\;, \\
&[n,2]_{q,\omega} = \big(n(n-1)[n-2]_q-2n(n-2)[n-1]_q+(n-2)(n-1)[n]_q\big)\omega_0^2/2\;,
\end{align*}
where $\omega_0$ is given by (\ref{DqDqw}).
Taking $\omega=0$ in (\ref{Dqwxn}) we see that $D_q$ fulfills
\begin{equation}\label{Dqxn}
D_{q}x^{n}=[n]_qx^{n-1}\quad(n=0,1,\ldots)\;.
\end{equation}
The usefulness of this property relies upon the following fact:
if ${\bf u}\in\mathcal{P}^*$, $\phi\in\mathcal{P}_2$, and $\psi\in\mathcal{P}_1$,
then ${\bf u}$ satisfies the functional equation $D_q(\phi{\bf u})=\psi{\bf u}$ if and only if the sequence of
moments $(u_n:=\langle{\bf u},x^n\rangle)_{n\geq0}$ satisfies the homogeneous second order linear difference equation
\begin{equation}\label{uDqxn}
d_n(q)u_{n+1}+e_n(q)u_{n}+f_n(q)u_{n-1}=0\quad(n=0,1,\ldots)\;,
\end{equation}
where $d_n(q)$, $e_n(q)$, and $f_n(q)$ are complex numbers.
Of course, taking into account (\ref{Dqwxn}),
the analogous to property (\ref{Dqxn}) no longer holds true if $D_{q}$ is replaced by $D_{q,\omega}$ ($\omega\neq0$).
Hence, one can not expect that the moments corresponding to a functional ${\bf u}$ fulfilling $D_{q,\omega}(\phi{\bf u})=\psi{\bf u}$ ---being ${\bf u}$, $\phi$, and $\psi$ as above--- satisfy a second order difference equation like (\ref{uDqxn}).
H\"acker replaced the power basis $(x^n)_{n\geq0}$ by a different polynomial basis,
$(X_n)_{n\geq0}\equiv(X_n(\cdot;q,\omega))_{n\geq0}$, chosen so that
\begin{equation}\label{DqwXn}
D_{q,\omega}X_n=\alpha_nX_{n-1}\quad(n=1,2,\ldots)\;,
\end{equation}
for suitable $\alpha_n\equiv\alpha_n(q,\omega)\in\mathbb{C}\setminus\{0\}$.
This is achieved by choosing \cite[p. 12]{H1993}
\begin{equation}\label{Xnqw1}
X_0:=1\;,\quad
X_{n}:=f\, H_{q,\omega}(f)\, H^2_{q,\omega}(f)\cdots H^{n-1}_{q,\omega}(f)\quad(n=1,2,\ldots)\;,
\end{equation}
where $H_{q,\omega}:=L_{q,\omega}^*$ and $f(x):=x$.
For this sequence $(X_n)_{n\geq0}$, H\"acker \cite[Lemma C.4]{H1993}
shown that the number $\alpha_n$ appearing in (\ref{DqwXn}) is given explicitly
by $\alpha_n:=q^{1-n}[n]_q$ (so, indeed, it does depend on $q$ and not on $\omega$).
It is easy to see that
$$
X_{n+1}(x)=q^{-n}\big(x-\omega[n]_q\big)X_{n}(x)\quad(n=0,1,\ldots)\;,
$$
and so we arrive at the explicit expression
$$
X_{n}(x)=q^{-\binom{n}{2}}\prod_{j=0}^{n-1}\big(x-\omega[j]_q\big)\quad(n=0,1,\ldots)\;.
$$
For our purposes it is more convenient to use a basis (of $\mathcal{P}$) of monic polynomials,
namely $(Y_n)_{n\geq0}\equiv(Y_n(\cdot;q,\omega))_{n\geq0}$, where $Y_n:=q^{\binom{n}{2}}X_{n}$,
so that 
$$
Y_0(x)=1\;,\quad
Y_{n+1}(x)=\big(x-\omega[n]_q\big)Y_n(x)=\prod_{j=0}^{n}\big(x-\omega[j]_q\big)\quad(n=0,1,\ldots)\;.
$$
Clearly, $(Y_n)_{n\geq0}$ fulfills the desired property:
\begin{equation}\label{DqwFn}
D_{q,\omega}Y_n(x)=[n]_qY_{n-1}(x)\quad(n=1,2,\ldots)\;.
\end{equation}
Finally, using (\ref{DqwFn}) it is straightforward to show that ${\bf u}\in\mathcal{P}^*$
satisfies the functional equation $D_{q,\omega}(\phi{\bf u})=\psi{\bf u}$
(being $\phi\in\mathcal{P}_2$ and $\psi\in\mathcal{P}_1$) if and only if the sequence of
moments with respect to the basis $(Y_n)_{n\geq0}$, $(y_n:=\langle{\bf u},Y_n\rangle)_{n\geq0}$,
fulfills
\begin{equation}\label{uDqwxn}
d_ny_{n+1}+\big(e_n+\omega[n]_qd_{n-1})\big)y_{n}+[n]_q(c+\omega e_{n-1})y_{n-1}=0\quad(n=0,1,\ldots)\;,
\end{equation}
where $d_n\equiv d_n(q)$ and $e_n\equiv e_n(q,\omega)$ are defined as in (\ref{abcpr2-Dqw}).

\section{Proof of Theorem \ref{Dqw-main-Thm}}

\subsection{Preliminary results}

Given a nonnegative integer number $k$ and a monic polynomial $P_n$ of degree $n$,
we denote by $P_n^{[k]}\equiv P_n^{[k]}(\cdot;q,\omega)$ the monic polynomial of degree $n$ defined by
\begin{equation}\label{def-Pnkmonic}
P_n^{[k]}(x):=\frac{D_{q,\omega}^k P_{n+k}(x)}{\mbox{$\prod_{j=1}^{k}[n+j]_q$}}
=\frac{[n]_q!}{[n+k]_q!}\,D_{q,\omega}^k P_{n+k}(x)\quad(k,n=0,1,\ldots).
\end{equation}
(If $k=0$, it is understood that $D_{q,\omega}^0f=f$ and that empty product equals one.)
We assume that ${\bf u}\in\mathcal{P}^*$ satisfies the functional equation
\begin{equation}\label{2.1}
{\bf D}_{q,\omega}(\phi {\bf u})=\psi \textbf{u}\;,
\end{equation}
where $\phi\in\mathcal{P}_2$ and $\psi\in\mathcal{P}_1$. Set
\begin{equation}\label{def-unk}
{\bf u}^{[0]}:={\bf u}\;,\quad
{\bf u}^{[k]}:={\bf L}_{q,\omega}\big(\phi{\bf u}^{[k-1]}\big)
=L_{q,\omega}\phi\,{\bf L}_{q,\omega}{\bf u}^{[k-1]}\quad(k=1,2,\ldots)\;,
\end{equation}
where the last equality holds by (\ref{P4a}).
Iterating (\ref{def-unk}) and taking into account (\ref{P1}) yields
\begin{equation}\label{repres-unk}
{\bf u}^{[k]}=\Big(\prod_{j=1}^kL_{q,\omega}^j\phi\Big){\bf L}_{q,\omega}^k{\bf u}
=\Phi(\cdot;k){\bf L}_{q,\omega}^k{\bf u}\quad(k=0,1,\ldots)\;,
\end{equation}
where $\Phi(\cdot;k)$ is the polynomial given by (\ref{Rod-knPhi}).
Moreover, ${\bf u}^{[k]}$ fulfils the functional equation (see \cite[Theorem 3.1]{F1998})
\begin{equation}\label{feq-unk}
{\bf D}_{q,\omega}\big(\phi{\bf u}^{[k]}\big)=\psi^{[k]}{\bf u}^{[k]}\quad(k=0,1,\ldots)\;,
\end{equation}
where $\psi^{[k]}\in\mathcal{P}_1$ is defined by
\begin{equation}\label{def-psik}
\psi^{[0]}:=\psi\;,\quad
\psi^{[k]}:=D_{q,\omega}\phi+qL_{q,\omega}\psi^{[k-1]}\quad(k=1,2,\ldots)\;.
\end{equation}
We point out that equality (\ref{feq-unk}) was stated in \cite{F1998}
under the assumption that ${\bf u}$ is a regular functional, but inspection of the proof given therein shows that
the equality remains true without such assumption.
Using mathematical induction on $k$, we prove that $\psi^{[k]}$ is explicitly given by
\begin{equation}\label{explicit-psik}
\psi^{[k]}(x)=d_{2k}x+e_k\quad(k=0,1,\ldots)\;,
\end{equation}
where $d_{2k}$ and $e_k$ are defined by (\ref{abcpr2-Dqw}).
This representation (\ref{explicit-psik}) has not been observed in \cite{F1998}.
It will play a central role along this work.

\begin{definition}\label{qw-admissible-pair}
$(\phi,\psi)$ is called a $(q,\omega)-$admissible pair if
\begin{equation}\label{qw-admiss-cond}
\phi\in\mathcal{P}_2\;,\quad\psi\in\mathcal{P}_1\;,\quad
d_n:=\psi'\,q^n+\mbox{$\frac12$}\phi^{''}\,[n]_{q}\neq0\quad\big(\forall n\in\mathbb{N}_0\big)\;.
\end{equation}
\end{definition}

\begin{lemma}\label{lemmaRn+1}
Let ${\bf u}\in\mathcal{P}^*\setminus\{{\bm 0}\}$.
Suppose that ${\bf u}$ satisfies $(\ref{2.1})$,
where $\phi\in\mathcal{P}_2$ and $\psi\in\mathcal{P}_1$.
Let $(Q_n)_{n\geq0}$ be any simple set of polynomials and define
\begin{equation}\label{def-Rn+1}
\begin{array}{rl}
R_{n+1}(x)& :=\phi(x) D_{q,\omega}^* Q_n(x) +q \psi(x) Q_n (x) \\ [0.5em]
& \;=a_nq^{1-n}d_nx^{n+1}+\mbox{\rm (lower degree terms)}\;,
\end{array}
\end{equation}
where $a_n\in\mathbb{C}\setminus\{0\}$ is the leading coefficient
of $Q_n$ and $d_n$ is defined as in $(\ref{qw-admiss-cond})$.
Then the following functional equation holds:
\begin{equation}\label{Dqw-RQ}
{\bf D}_{q,\omega}^*\big(Q_{n} {\bf u}^{[1]}\big)=R_{n+1} {\bf u}\quad(n=0,1,\ldots)\;.
\end{equation}
Moreover, $(R_n)_{n\geq0}$ is a simple set of polynomials
if and only if $(\phi,\psi)$ is a $(q,\omega)-$admissible pair,
provided that we define $R_0(x):=1$. 
\end{lemma}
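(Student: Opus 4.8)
The plan is to establish the functional identity (\ref{Dqw-RQ}) first, and then to read off the leading coefficient of $R_{n+1}$, from which the characterization of when $(R_n)_{n\geq0}$ is a simple set follows at once. For (\ref{Dqw-RQ}), I would apply the functional Leibnitz formula (\ref{P6}), in the form ${\bf D}_{q,\omega}(f{\bf v})=D_{q,\omega}f\,{\bf L}_{q,\omega}{\bf v}+f\,{\bf D}_{q,\omega}{\bf v}$, with $(q,\omega)$ replaced throughout by $(1/q,-\omega/q)$, to the product $Q_n{\bf u}^{[1]}$; recalling $D_{q,\omega}^*=D_{1/q,-\omega/q}$, ${\bf L}_{q,\omega}^*={\bf L}_{1/q,-\omega/q}$ and ${\bf D}_{q,\omega}^*={\bf D}_{1/q,-\omega/q}$, this gives
$$
{\bf D}_{q,\omega}^*\big(Q_n{\bf u}^{[1]}\big)=D_{q,\omega}^*Q_n\cdot{\bf L}_{q,\omega}^*{\bf u}^{[1]}+Q_n\,{\bf D}_{q,\omega}^*{\bf u}^{[1]}\;.
$$
It then remains to identify the two functionals on the right. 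From (\ref{def-unk}), ${\bf u}^{[1]}={\bf L}_{q,\omega}(\phi{\bf u})$, so (\ref{P2}) yields ${\bf L}_{q,\omega}^*{\bf u}^{[1]}=\phi{\bf u}$; and the relation ${\bf D}_{q,\omega}^*{\bf L}_{q,\omega}=q{\bf D}_{q,\omega}$ from (\ref{P4}), combined with the hypothesis (\ref{2.1}), yields ${\bf D}_{q,\omega}^*{\bf u}^{[1]}=q\,{\bf D}_{q,\omega}(\phi{\bf u})=q\psi{\bf u}$. Substituting both identities gives ${\bf D}_{q,\omega}^*(Q_n{\bf u}^{[1]})=(\phi D_{q,\omega}^*Q_n+q\psi Q_n){\bf u}=R_{n+1}{\bf u}$, which is (\ref{Dqw-RQ}). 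The only genuine subtlety here is to use this form of (\ref{P6}): the alternative form ${\bf D}_{q,\omega}(f{\bf v})=D_{q,\omega}f\,{\bf v}+L_{q,\omega}f\,{\bf D}_{q,\omega}{\bf v}$ would force manipulations with compositions such as $L_{q,\omega}^*D_{q,\omega}^*$ and is much less transparent; so I regard choosing the right Leibnitz form as the main (and essentially the only) obstacle.

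Next I would compute the leading term of $R_{n+1}$. Writing $Q_n(x)=a_nx^n+\cdots$, apply (\ref{Dqwxn}) with $(q,\omega)$ replaced by $(1/q,-\omega/q)$ to obtain $D_{q,\omega}^*x^n=[n]_{1/q}x^{n-1}+\cdots$, and then use the elementary identity $[n]_{1/q}=q^{1-n}[n]_q$ (immediate from $[n]_q=1+q+\cdots+q^{n-1}$). Hence $D_{q,\omega}^*Q_n(x)=a_nq^{1-n}[n]_qx^{n-1}+\cdots$, so multiplication by $\phi(x)=ax^2+bx+c$ contributes the term $a\,a_nq^{1-n}[n]_q\,x^{n+1}$, while $q\psi(x)Q_n(x)=q(dx+e)(a_nx^n+\cdots)$ contributes $q\,d\,a_n\,x^{n+1}$. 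Adding these and invoking $d_n=dq^n+a[n]_q$ from (\ref{abcpr2-Dqw}), one gets $R_{n+1}(x)=a_nq^{1-n}d_nx^{n+1}+(\mbox{lower degree terms})$ as claimed, the degenerate case $a=0$ being covered by the same formula.

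Finally, for the stated equivalence, recall that a simple set of polynomials is one whose $n$-th term has exact degree $n$ for every $n\geq0$. Now $R_0:=1$ has degree $0$, and for $m\geq1$, writing $R_m=R_{(m-1)+1}$, the previous paragraph shows that the coefficient of $x^m$ in $R_m$ equals $a_{m-1}q^{2-m}d_{m-1}$; since $a_{m-1}\neq0$ (because $(Q_n)_{n\geq0}$ is a simple set) and $q\neq0$, this coefficient is nonzero precisely when $d_{m-1}\neq0$. Hence $(R_n)_{n\geq0}$ is a simple set if and only if $d_n\neq0$ for every $n\in\mathbb{N}_0$, that is, if and only if $(\phi,\psi)$ is a $(q,\omega)$-admissible pair in the sense of Definition \ref{qw-admissible-pair}; note that $\psi'=d$ and $\frac{1}{2}\phi''=a$, so condition (\ref{qw-admiss-cond}) is precisely $d_n=dq^n+a[n]_q\neq0$. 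This completes the plan.
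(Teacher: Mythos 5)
Your proposal is correct, and its second and third steps (the leading coefficient of $R_{n+1}$, via $[n]_{1/q}=q^{1-n}[n]_q$, and the resulting characterization of when $(R_n)_{n\geq0}$ is a simple set) coincide with the paper's argument. Where you genuinely differ is in the derivation of the functional identity (\ref{Dqw-RQ}): the paper proves it by pairing ${\bf D}_{q,\omega}^*\big(Q_n{\bf u}^{[1]}\big)$ against an arbitrary $f\in\mathcal{P}$, unwinding the definitions of ${\bf D}_{q,\omega}^*$ and ${\bf L}_{q,\omega}$ and invoking the polynomial-level product rule (\ref{P5}) in the form $\big(L_{q,\omega}^*Q_n\big)\big(D_{q,\omega}^*f\big)=D_{q,\omega}^*(fQ_n)-fD_{q,\omega}^*Q_n$, whereas you stay entirely at the level of functionals, applying the starred analogue of (\ref{P6}) to $Q_n{\bf u}^{[1]}$ and then identifying ${\bf L}_{q,\omega}^*{\bf u}^{[1]}=\phi{\bf u}$ via (\ref{P2}) and ${\bf D}_{q,\omega}^*{\bf u}^{[1]}=q{\bf D}_{q,\omega}(\phi{\bf u})=q\psi{\bf u}$ via (\ref{P4}) and (\ref{2.1}). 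Your route is shorter and arguably more transparent, and it is consistent with the paper's own later practice (the proof of Lemma \ref{lemmaRodFunctional} freely uses ``the analogue of (\ref{P6}) for ${\bf D}_{q,\omega}^*$''); the one point you should make explicit is that the standing hypotheses (\ref{q-notexp}) are stable under $(q,\omega)\mapsto(1/q,-\omega/q)$, so that every starred identity is legitimately obtained from its unstarred version by this substitution. What the paper's test-function computation buys is self-containedness: it relies only on the definitions (\ref{def-Dqwu}), (\ref{def-unk}) and the rule (\ref{P5}) for polynomials, without presupposing the functional product rule in starred form. Both are complete proofs.
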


\begin{proof}
Fix $n\in\mathbb{N}_0$ and take arbitrarily $f\in\mathcal{P}$. Then
\begin{align*}
\big\langle  {\bf D}_{q,\omega}^* \big(Q_n {\bf u}^{[1]}\big), f \big\rangle
& =-q  \langle  {\bf L}_{q,\omega} (\phi {\bf u}), Q_n D_{q,\omega} f \rangle
=-\big\langle  \phi {\bf u}, \big(L_{q,\omega}^* Q_n\big)\big(L_{q,\omega}^* D_{q,\omega} f\big) \big\rangle \\
& =-\big\langle  \phi {\bf u}, \big(L_{q,\omega}^* Q_n\big)\big(D_{q,\omega}^* f\big) \big\rangle\;.
\end{align*}
Now, using relation (\ref{P5}) with $D_{q,\omega}^*$ instead of $D_{q,\omega}$, we obtain
\begin{align*}
\big\langle {\bf D}_{q,\omega}^* \big(Q_n {\bf u}^{[1]}\big), f \big\rangle
& =-\langle\phi {\bf u}, D_{q,\omega}^*(fQ_n)-fD_{q,\omega}^*Q_n \rangle \\
& =q\big\langle  {\bf D}_{q,\omega}(\phi {\bf u}), Q_nf\rangle+\langle\phi {\bf D}_{q,\omega}^*Q_n, f\rangle \\
& =\big\langle  {\bf u}, \big(q\psi Q_n+\phi D_{q,\omega}^*Q_n\big) f\big\rangle
=\langle R_{n+1}{\bf u}, f\rangle\;.
\end{align*}
This proves (\ref{Dqw-RQ}).
Moreover, taking into account (\ref{Dqwxn}), we have
\begin{align*}
D_{q,\omega}^*Q_n(x)&=a_nD_{1/q,-\omega/q}\,x^n+\mbox{\rm (lower degree terms)} \\
&=a_nq^{1-n}[n]_q\,x^{n-1}+\mbox{\rm (lower degree terms)}\,,
\end{align*}
where we also took into account that $[n]_{q^{-1}}=q^{1-n}[n]_q$. Hence
$$
R_{n+1}(x) =\big(aa_nq^{1-n}[n]_q+qa_nd\big)x^{n+1}+\mbox{\rm (lower degree terms)\;,}
$$
and so we obtain the expression for $R_{n+1}$ given in (\ref{def-Rn+1}).
Thus, $\deg R_{n+1}=n+1$ for each $n=0,1,\ldots$ if and only if $d_n\neq0$ for each $n=0,1,\ldots$, i.e.,
if and only if $(\phi,\psi)$ is a $(q,\omega)-$admissible pair. This concludes the proof.
\end{proof}

In the statement of the next lemma, which is interesting for its own sake,
we emphasize that neither the given functional ${\bf u}$ needs to be regular
nor the sequence $(P_n)_{n\geq0}$ needs to be an OPS.
Under the assumption that ${\bf u}$ is regular and satisfies (\ref{2.1}), formula (\ref{RodRn})
in bellow may be derived in a very simple way (see Remark \ref{Rmk7} below).

\begin{lemma}\label{lemmaRodFunctional}
Let ${\bf u}\in\mathcal{P}^*\setminus\{{\bm 0}\}$.
Suppose that ${\bf u}$ satisfies $(\ref{2.1})$,
where $(\phi,\psi)$ is a $(q,\omega)-$admissible pair.
Then the Rodrigues-type formula
\begin{equation}\label{RodRn}
P_n{\bf u}=k_n{\bf D}_{1/q,-\omega/q}^n\,{\bf u}^{[n]}\quad(n=0,1,\ldots)
\end{equation}
holds in $\mathcal{P}^*$, where $k_n$ is defined as in $(\ref{Rod-knPhi})$
and $(P_n)_{n\geq0}$ is a simple set of monic polynomials given by the three-term recurrence relation
$(\ref{ttrrC1-Dqw})$--$(\ref{EqDistC3-Dqw2})$.
\end{lemma}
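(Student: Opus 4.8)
The plan is to establish (\ref{RodRn}) simultaneously for the whole chain of derived functionals. For $k=0,1,\ldots$, recall from (\ref{feq-unk})--(\ref{explicit-psik}) that ${\bf u}^{[k]}$ satisfies ${\bf D}_{q,\omega}(\phi\,{\bf u}^{[k]})=\psi^{[k]}{\bf u}^{[k]}$ with $\psi^{[k]}(x)=d_{2k}x+e_k$; moreover $(\phi,\psi^{[k]})$ is again a $(q,\omega)$-admissible pair, since its admissibility constant in (\ref{qw-admiss-cond}) equals $d_{2k}q^{n}+a[n]_q=d_{n+2k}$, which is nonzero for every $n$ by hypothesis, and its associated ``$e$-sequence'' is $e_{n+k}$. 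Hence one may define, for each $k$, the monic simple set $(P_n^{[k]})_{n\geq0}$ by the recurrence (\ref{ttrrC1-Dqw}) with the coefficients obtained from (\ref{EqDistC3-Dqw1})--(\ref{EqDistC3-Dqw2}) after the substitutions $d_m\mapsto d_{m+2k}$ and $e_m\mapsto e_{m+k}$; in particular $(P_n^{[0]})_{n\geq0}=(P_n)_{n\geq0}$. Two elementary facts will be used repeatedly: $({\bf u}^{[k]})^{[m]}={\bf u}^{[k+m]}$, which is immediate from (\ref{def-unk}), and ${\bf D}_{q,\omega}^*\,{\bf u}^{[m+1]}=q\,\psi^{[m]}{\bf u}^{[m]}$, which follows by combining ${\bf u}^{[m+1]}={\bf L}_{q,\omega}(\phi\,{\bf u}^{[m]})$ with the relation ${\bf D}_{q,\omega}^*{\bf L}_{q,\omega}=q{\bf D}_{q,\omega}$ in (\ref{P4}) and with (\ref{feq-unk}).

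The crux is the purely polynomial identity
\begin{equation}\label{plan-star}
\phi(x)\,D_{q,\omega}^*P_n^{[k+1]}(x)+q\,\psi^{[k]}(x)\,P_n^{[k+1]}(x)=q^{1-n}\,d_{n+2k}\,P_{n+1}^{[k]}(x)\qquad(n,k\geq0).
\end{equation}
I would prove (\ref{plan-star}) by induction on $n$, with $k$ arbitrary, running it simultaneously with two companion identities: the lowering relation $D_{q,\omega}P_{n+1}^{[k]}=[n+1]_q\,P_n^{[k+1]}$ and a structure relation expressing $\big(\phi+((q-1)x+\omega)\psi^{[k]}\big)P_{n-1}^{[k+1]}$ as a linear combination of $P_{n+1}^{[k]}$, $P_n^{[k]}$, $P_{n-1}^{[k]}$ (the $(q,\omega)$-analogues of well-known facts from the classical theory). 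The case $n=0$ of (\ref{plan-star}) is a one-line check, as the left-hand side reduces to $q\psi^{[k]}(x)=q d_{2k}\big(x-\beta_0^{[k]}\big)=q d_{2k}P_1^{[k]}(x)$, using $\beta_0^{[k]}=-e_k/d_{2k}$. For the inductive step one substitutes the recurrence for $P_n^{[k+1]}$ into the left-hand side of (\ref{plan-star}), expands by means of the product rule (\ref{P5}) (for both $D_{q,\omega}$ and $D_{q,\omega}^*$) together with the shift and commutation relations (\ref{P1})--(\ref{P6}), applies the induction hypothesis at $n-1$ and $n-2$, and reorganizes the outcome --- this is where the companion relations intervene --- by means of the recurrence for $P_{n+1}^{[k]}$; everything then collapses to a short list of scalar identities among the $d_m$'s, $e_m$'s, and the coefficients (\ref{EqDistC3-Dqw1})--(\ref{EqDistC3-Dqw2}) (such as $d_{m+1}-d_m=q^m\big(a+(q-1)d\big)$ and $[j]_q q^{n}+[n]_q=[n+j]_q$), which follow directly from (\ref{abcpr2-Dqw}) and (\ref{explicit-psik}). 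This operational bookkeeping --- propagating the four operators $D_{q,\omega}$, $D_{q,\omega}^*$, $L_{q,\omega}$, $L_{q,\omega}^*$ through a level-dependent three-term recurrence --- is where I expect the main difficulty to lie; note that it involves only polynomials, with no use of the regularity of ${\bf u}$.

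Granting (\ref{plan-star}), formula (\ref{RodRn}) follows by a second, short induction on $n$, in which one proves that for every $k\geq0$
\begin{equation}\label{plan-gen}
P_n^{[k]}\,{\bf u}^{[k]}=k_n^{[k]}\,({\bf D}_{q,\omega}^*)^{n}\,{\bf u}^{[n+k]}\,,\qquad k_n^{[k]}:=q^{n(n-3)/2}\prod_{j=0}^{n-1}d_{n+j-1+2k}^{-1}\,;
\end{equation}
since $k_n^{[0]}=k_n$, $(P_n^{[0]})_{n\geq0}=(P_n)_{n\geq0}$, and ${\bf D}_{q,\omega}^*={\bf D}_{1/q,-\omega/q}$, the case $k=0$ is precisely (\ref{RodRn}). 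The base case $n=0$ is trivial, since $k_0^{[k]}=1$ and $P_0^{[k]}\equiv1$. For the inductive step, write $({\bf D}_{q,\omega}^*)^{n+1}{\bf u}^{[n+k+1]}={\bf D}_{q,\omega}^*\big(({\bf D}_{q,\omega}^*)^{n}{\bf u}^{[n+(k+1)]}\big)$ and apply (\ref{plan-gen}) at level $k+1$ to the inner term; this reduces $k_{n+1}^{[k]}({\bf D}_{q,\omega}^*)^{n+1}{\bf u}^{[n+k+1]}$ to $\big(k_{n+1}^{[k]}/k_n^{[k+1]}\big)\,{\bf D}_{q,\omega}^*\big(P_n^{[k+1]}{\bf u}^{[k+1]}\big)$. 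Since ${\bf u}^{[k+1]}=({\bf u}^{[k]})^{[1]}$ and $(\phi,\psi^{[k]})$ is admissible, Lemma \ref{lemmaRn+1} --- applied with ${\bf u}$ replaced by ${\bf u}^{[k]}$ and with $Q_n:=P_n^{[k+1]}$ --- yields ${\bf D}_{q,\omega}^*\big(P_n^{[k+1]}{\bf u}^{[k+1]}\big)=\big(\phi\,D_{q,\omega}^*P_n^{[k+1]}+q\,\psi^{[k]}P_n^{[k+1]}\big)\,{\bf u}^{[k]}$, which by (\ref{plan-star}) equals $q^{1-n}d_{n+2k}\,P_{n+1}^{[k]}\,{\bf u}^{[k]}$. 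It then suffices to observe the elementary identity $k_{n+1}^{[k]}\,q^{1-n}d_{n+2k}=k_n^{[k+1]}$ (a direct computation with the exponents of $q$ and the products of $d_m$'s), which gives $k_{n+1}^{[k]}({\bf D}_{q,\omega}^*)^{n+1}{\bf u}^{[n+k+1]}=P_{n+1}^{[k]}\,{\bf u}^{[k]}$ and completes the induction. Here $q\neq0$ and admissibility guarantee $k_n^{[k]}\neq0$, so every division is legitimate; taking $k=0$ in (\ref{plan-gen}) gives (\ref{RodRn}).
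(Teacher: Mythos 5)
There is a genuine gap, and it sits exactly where you yourself locate ``the main difficulty.'' Your architecture is attractive and genuinely different from the paper's: you reduce the Rodrigues formula to the single polynomial identity
$$
\phi\,D_{q,\omega}^*P_n^{[k+1]}+q\,\psi^{[k]}P_n^{[k+1]}=q^{1-n}d_{n+2k}\,P_{n+1}^{[k]}\,,
$$
after which the functional part of the argument (Lemma \ref{lemmaRn+1} applied to ${\bf u}^{[k]}$, the telescoping of the constants $k_n^{[k]}$, the descent in $k$) is short and correct --- I checked the base case, the constant identity $k_{n+1}^{[k]}q^{1-n}d_{n+2k}=k_n^{[k+1]}$, and the shifted admissibility $d_n^{[k]}=d_{n+2k}$. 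But the displayed identity, together with its companions (the lowering relation $D_{q,\omega}P_{n+1}^{[k]}=[n+1]_qP_n^{[k+1]}$ for the \emph{recurrence-defined} $P_n^{[k+1]}$, and an unspecified structure relation), is precisely the nontrivial content of the lemma once regularity is dropped. When ${\bf u}$ is regular these identities are classical consequences of orthogonality (essentially Lemma \ref{lemma-admissible} plus orthogonal expansion of $R_{n+1}$ against $(P_j)_j$); without regularity that route is closed, and your proposed substitute --- a triple simultaneous induction on $n$ --- is only described, not carried out. You do not state the structure relation precisely, do not verify that the three inductions close on one another, and do not exhibit the ``short list of scalar identities'' to which everything is supposed to collapse. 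Since the entire computational weight of the lemma lives in this step, the proposal as written does not constitute a proof.

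For comparison, the paper never proves (and never needs) your polynomial identity. It runs a two-step induction directly on the functional identity $P_n{\bf u}=k_n{\bf H}_{q,\omega}^n{\bf u}^{[n]}$: it expands ${\bf H}_{q,\omega}^{n}(\psi^{[n]}{\bf u}^{[n]})$ and ${\bf H}_{q,\omega}^{n-1}(\theta_2(\cdot;n){\bf u}^{[n-1]})$ with the Leibnitz formula (\ref{Leibnitz-Dqwnfu}), eliminates the intermediate terms ${\bf H}_{q,\omega}^{n-j}{\bf u}^{[n-1]}$ using the two induction hypotheses, and lands on $\big(A(\cdot;n)P_n+B(\cdot;n)P_{n-1}\big){\bf u}$, which the recurrence (\ref{ttrrC1-Dqw}) converts into $P_{n+1}{\bf u}$. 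That computation is heavy but is actually performed; yours is deferred. If you want to salvage your route economically, one option is to prove your key identity first under the additional assumption that ${\bf u}$ is regular (where it follows from orthogonality via Lemma \ref{lemmaRn+1} and Lemma \ref{lemma-admissible}, with $P_n^{[k]}$ as in (\ref{def-Pnkmonic})), and then observe that for fixed $n$ and $k$ both sides are rational functions of $a,b,c,d,e$ whose denominators involve only finitely many $d_m$, so the identity propagates from the (dense) regular parameter set to all admissible parameters; but some such argument must be supplied.
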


\begin{proof}
Since $(\phi,\psi)$ is a $(q,\omega)-$admissible pair, then $d_n\neq0$ for each $n=0,1,\ldots$.
Hence the sequence $(P_n)_{n\geq0}$ given by $(\ref{ttrrC1-Dqw})$--$(\ref{EqDistC3-Dqw2})$ is well defined.
For simplicity, we set ${\bf H}_{q,\omega}:={\bf D}_{q,\omega}^*:={\bf D}_{1/q,-\omega/q}$,
and so (\ref{RodRn}) reads as
\begin{equation}\label{RodRnH}
P_n{\bf u}=k_n{\bf H}_{q,\omega}^n\,{\bf u}^{[n]}\quad(n=0,1,\ldots)\;.
\end{equation}
Notice that the second relation in (\ref{P4}) can be rewritten as
\begin{equation}\label{Rodqw4}
{\bf H}_{q,\omega}\,{\bf L}_{q,\omega}=q{\bf D}_{q,\omega}\,,
\end{equation}
while, setting $H_{q,\omega}:=D_{q,\omega}^*$,
Leibnitz rule (\ref{Leibnitz-Dqwnfu}) applied to ${\bf D}_{q,\omega}^*$ gives
\begin{equation}\label{Rodqw5}
{\bf H}_{q,\omega}^n(f{\bf u})=\sum_{k=0}^n \qbin{n}{k}{q^{-1}}
L_{q,\omega}^{*k}\big(H_{q,\omega}^{n-k}f\big)\, {\bf H}_{q,\omega}^k{\bf u}
\quad(f\in\mathcal{P})\;.
\end{equation}
We will prove (\ref{RodRnH}) by mathematical induction on $n$.
For $n=0$, (\ref{RodRnH}) becomes a trivial equality.
For $n=1$, we use (\ref{def-unk}) and (\ref{Rodqw4}) to deduce
$$
{\bf H}_{q,\omega}\,{\bf u}^{[1]}
={\bf H}_{q,\omega} {\bf L}_{q,\omega}\big(\phi{\bf u}\big)
=q{\bf D}_{q,\omega}\big(\phi{\bf u}\big)=q\psi{\bf u} \;.
$$
Therefore, since $P_1(x)=x-\beta_0=x-(-e_0/d_0)=x+e/d=d^{-1}\psi(x)$, and so $q\psi=qdP_1=k_1^{-1}P_1$,
we obtain (\ref{RodRnH}) for $n=1$.
Assume now that (\ref{RodRnH}) holds for given consecutive numbers $n-1$ and $n$
($n\in\mathbb{N}$), i.e., suppose that (induction hypothesis)
\begin{equation}
P_{n-1}{\bf u}=k_{n-1}{\bf H}_{q,\omega}^{n-1}\,{\bf u}^{[n-1]}\;,\quad
P_n{\bf u}=k_n{\bf H}_{q,\omega}^n\,{\bf u}^{[n]} \,. \label{Rodqw1ab}
\end{equation}
We need to show that
\begin{equation}\label{Rodqw3}
P_{n+1}{\bf u}=k_{n+1}{\bf H}_{q,\omega}^{n+1}\,{\bf u}^{[n+1]}\,.
\end{equation}
To prove (\ref{Rodqw3}), we start by noting that
\begin{equation}\label{Rodqw6}
{\bf H}_{q,\omega}^{n+1}\,{\bf u}^{[n+1]}=q{\bf H}_{q,\omega}^{n}\big(\psi^{[n]}{\bf u}^{[n]}\big)\;.
\end{equation}
Indeed, using successively (\ref{def-unk}) and (\ref{Rodqw4}), we have
$$
{\bf H}_{q,\omega}^{n+1}\,{\bf u}^{[n+1]}
={\bf H}_{q,\omega}^{n}\Big({\bf H}_{q,\omega}{\bf L}_{q,\omega}(\phi{\bf u}^{[n]}\big)\Big)
=q{\bf H}_{q,\omega}^{n}\Big({\bf D}_{q,\omega}\big(\phi{\bf u}^{[n]}\big)\Big)\;,
$$
and so (\ref{Rodqw6}) follows taking into account (\ref{feq-unk}).
Next, by (\ref{Rodqw5}) with $f=\psi^{[n]}=d_{2n}x+e_n$,
$$
{\bf H}_{q,\omega}^{n}\big(\psi^{[n]}{\bf u}^{[n]}\big)
=\big(L_{q,\omega}^{*n}\,\psi^{[n]}\big){\bf H}_{q,\omega}^n{\bf u}^{[n]}
+[n]_{q^{-1}}d_{2n}{\bf H}_{q,\omega}^{n-1}{\bf u}^{[n]}\;.
$$
Replacing this into (\ref{Rodqw6})
and using the second identity in (\ref{Rodqw1ab}), we deduce
\begin{equation}\label{Rodqw7}
[n]_{q^{-1}}d_{2n}{\bf H}_{q,\omega}^{n-1}{\bf u}^{[n]}
=q^{-1}{\bf H}_{q,\omega}^{n+1}\,{\bf u}^{[n+1]}
-k_n^{-1}\big(L_{q,\omega}^{*n}\,\psi^{[n]}\big)P_n{\bf u}\;.
\end{equation}
Taking into account both identities appearing in (\ref{Rodqw1ab}),
we may change $n$ into $n-1$ in the preceding reasoning, to obtain
\begin{equation}\label{Rodqw7b}
[n-1]_{q^{-1}}d_{2n-2}{\bf H}_{q,\omega}^{n-2}{\bf u}^{[n-1]}
=\Big(q^{-1}k_n^{-1}P_n-k_{n-1}^{-1}\big(L_{q,\omega}^{*n-1}\,\psi^{[n-1]}\big)P_{n-1}\Big){\bf u}\;.
\end{equation}
Next, by the analogue of (\ref{P6}) for ${\bf D}_{q,\omega}^*$, we have
\begin{align}
{\bf H}_{q,\omega}\big(\psi^{[n]}{\bf u}^{[n]}\big)
&=\big(D_{q,\omega}^*\psi^{[n]}\big){\bf L}_{q,\omega}^*{\bf u}^{[n]}+\psi^{[n]}{\bf H}_{q,\omega}{\bf u}^{[n]} \nonumber \\
&= d_{2n}{\bf L}_{q,\omega}^*{\bf L}_{q,\omega}\big(\phi{\bf u}^{[n-1]}\big)
+\psi^{[n]}{\bf H}_{q,\omega}{\bf L}_{q,\omega}\big(\phi{\bf u}^{[n-1]}\big) \nonumber \\
&= \big(d_{2n}\phi+q\psi^{[n]}\psi^{[n-1]}\big){\bf u}^{[n-1]}\;, \label{Rodqw8}
\end{align}
where in the last equality we used (\ref{P2}), (\ref{Rodqw4}), and (\ref{feq-unk}).
From (\ref{Rodqw6}) and (\ref{Rodqw8}), we obtain
\begin{equation}\label{Rodqw9}
{\bf H}_{q,\omega}^{n+1}\,{\bf u}^{[n+1]}
=q{\bf H}_{q,\omega}^{n-1}\big(\theta_2(\cdot;n){\bf u}^{[n-1]}\big)\;,
\end{equation}
where $\theta_2(x;n):=d_{2n}\phi+q\psi^{[n]}\psi^{[n-1]}$.
Since $\deg\theta_2(\cdot;n)\leq2$, applying the Leibnitz formula (\ref{Rodqw5}) to the right-hand side of
(\ref{Rodqw9}), we obtain
\begin{align}
{\bf H}_{q,\omega}^{n+1}\,{\bf u}^{[n+1]}
&=qL_{q,\omega}^{*\,n-1}\big(\theta_2(\cdot;n)\big){\bf H}_{q,\omega}^{n-1}{\bf u}^{[n-1]} \nonumber \\
&\quad+q[n-1]_{q^{-1}}L_{q,\omega}^{*\,n-2}\big(D_{q,\omega}^*
\theta_2(\cdot;n)\big){\bf H}_{q,\omega}^{n-2}{\bf u}^{[n-1]} \label{Rodqw10} \\
&\quad+\frac{q[n-1]_{q^{-1}}[n-2]_{q^{-1}}}{[2]_{q^{-1}}}L_{q,\omega}^{*\,n-3}\big(D_{q,\omega}^{*\,2}
\theta_2(\cdot;n)\big){\bf H}_{q,\omega}^{n-3}{\bf u}^{[n-1]}  \nonumber\;.
\end{align}
Now, since $\phi(x)=ax^2+bx+c$, $\psi^{[k]}=d_{2k}x+e_k$, and the relations
$$
d_{k+1}=a+qd_k\;,\quad
e_{k+1}=b+qe_k+\omega d_{2k+1}\;,\quad
d_{2k+2}+qd_{2k}=(1+q)d_{2k+1}
$$
hold for each $k=0,1,\ldots$, we show that $\theta_2(\cdot;n)$ is given explicitly by
$$
\theta_2(x;n)=d_{2n}d_{2n-1}x^2+d_{2n-1}\big((1+q)e_n-\omega d_{2n}\big)x+cd_{2n}+qe_ne_{n-1}\;.
$$
(Hence, $\deg\theta_2(\cdot;n)=2$.) From this and taking into account (\ref{Dqwxn}), we compute
\begin{align*}
&D_{q,\omega}^*\big(\theta_2(\cdot;n)\big)
=[2]_{q^{-1}}d_{2n-1}\big(d_{2n}x+qe_n-\omega d_{2n}\big)\;, \\
&D_{q,\omega}^{*\,2}\big(\theta_2(\cdot;n)\big)=[2]_{q^{-1}}d_{2n-1}d_{2n}\;.
\end{align*}
Moreover, by (\ref{P1}),
$$
L_{q,\omega}^{*k}1=1\;,\quad
L_{q,\omega}^{*k}x=q^{-k}\big(x-\omega[k]_q\big)\;,\quad
L_{q,\omega}^{*k}x^2=q^{-2k}\big(x^2-2\omega[k]_qx+\omega^2[k]_q^2\big)
$$
for each $k=0,1,\ldots$, hence we deduce
\begin{align}
L_{q,\omega}^{*\,n-1}\big(\theta_2(\cdot;n)\big)&=q^{2-2n}d_{2n}d_{2n-1}x^2 \nonumber \\
&\quad+q^{1-n}d_{2n-1}\Big((1+q)e_n-\omega d_{2n}\big([n]_{q^{-1}}+q^{-1}[n-1]_{q^{-1}}\big)\Big)x \nonumber \\
&\quad+\omega^2q^{1-n}d_{2n}d_{2n-1}[n-1]_{q}[n]_{q^{-1}} \nonumber \\
&\quad\quad+qe_n\big(e_{n-1}-\omega(1+q)d_{2n-1}q^{-n}[n-1]_q\big)+cd_{2n}\;,  \label{Rodqw9aa} \\
\rule{0pt}{1.2em}
L_{q,\omega}^{*\,n-2}\big(D_{q,\omega}^*\theta_2(\cdot;n)\big)
&=[2]_{q^{-1}}d_{2n-1}\big(d_{2n}q^{2-n}x+qe_n-\omega d_{2n}[n-1]_{q^{-1}}\big)\;, \label{Rodqw9ab} \\
\rule{0pt}{1.2em}
L_{q,\omega}^{*\,n-3}\big(D_{q,\omega}^{*\,2}\theta_2(\cdot;n)\big)
&=[2]_{q^{-1}}d_{2n-1}d_{2n}\;. \label{Rodqw9ac}
\end{align}
Relation (\ref{Rodqw9ac}) allow us to rewrite (\ref{Rodqw10}) as
\begin{align}
&q[n-1]_{q^{-1}}[n-2]_{q^{-1}}d_{2n-1}d_{2n}{\bf H}_{q,\omega}^{n-3}{\bf u}^{[n-1]}  \nonumber\\
&\hspace{5em}= {\bf H}_{q,\omega}^{n+1}\,{\bf u}^{[n+1]}
-qL_{q,\omega}^{*\,n-1}\big(\theta_2(\cdot;n)\big){\bf H}_{q,\omega}^{n-1}{\bf u}^{[n-1]} \label{Rodqw10aa}\\
&\hspace{5em}\quad-q[n-1]_{q^{-1}}L_{q,\omega}^{*\,n-2}\big(D_{q,\omega}^*
\theta_2(\cdot;n)\big){\bf H}_{q,\omega}^{n-2}{\bf u}^{[n-1]}\;. \nonumber
\end{align}
On the other hand,
\begin{align*}
{\bf H}_{q,\omega}^{n-1}\,{\bf u}^{[n]} &
={\bf H}_{q,\omega}^{n-2}\big({\bf H}_{q,\omega}{\bf L}_{q,\omega}\big(\phi{\bf u}^{[n-1]}\big)
=q{\bf H}_{q,\omega}^{n-2}\big({\bf D}_{q,\omega}\big(\phi{\bf u}^{[n-1]}\big)
=q{\bf H}_{q,\omega}^{n-2}\big(\psi^{[n-1]}{\bf u}^{[n-1]}\big) \\
&=qL_{q,\omega}^{*\,n-2}\big(\psi^{[n-1]}\big){\bf H}_{q,\omega}^{n-2}{\bf u}^{[n-1]}
+q[n-2]_{q^{-1}}L_{q,\omega}^{*\,n-3}\big(D_{q,\omega}^*\psi^{[n-1]})\big){\bf H}_{q,\omega}^{n-3}{\bf u}^{[n-1]} \;,
\end{align*}
where in the last equality we used once again the Leibnitz formula.
As a consequence, since $L_{q,\omega}^{*\,n-3}\big(D_{q,\omega}^*\psi^{[n-1]})\big)=d_{2n-2}$, we obtain
\begin{equation}\label{Rodqw11}
q[n-2]_{q^{-1}}d_{2n-2}{\bf H}_{q,\omega}^{n-3}{\bf u}^{[n-1]}
={\bf H}_{q,\omega}^{n-1}\,{\bf u}^{[n]}-qL_{q,\omega}^{*\,n-2}\big(\psi^{[n-1]}\big){\bf H}_{q,\omega}^{n-2}{\bf u}^{[n-1]}\;.
\end{equation}
Substituting in (\ref{Rodqw10aa}) the expression for
${\bf H}_{q,\omega}^{n-3}{\bf u}^{[n-1]}$ given by (\ref{Rodqw11}),
and then taking into account (\ref{Rodqw7}) and (\ref{Rodqw7b}),
as well as the first equation in (\ref{Rodqw1ab}), we deduce
\begin{align}
\left(1-\frac{q^{-1}[n-1]_{q^{-1}}}{[n]_{q^{-1}}}\frac{d_{2n-1}}{d_{2n-2}}\right)
{\bf H}_{q,\omega}^{n+1}\,{\bf u}^{[n+1]}
=\Big(A(\cdot;n)P_n+B(\cdot;n)P_{n-1}\Big){\bf u}\;, \label{AxnBxn}
\end{align}
where $A(\cdot;n)$ and $B(\cdot;n)$ are polynomials given by
\begin{align*}
A(x;n)&:=\left.\frac{k_n^{-1}}{d_{2n-2}}\right\{
-\frac{[n-1]_{q^{-1}}d_{2n-1}\big(L_{q,\omega}^{*\,n}\psi^{[n]}\big)}{[n]_{q^{-1}}} \\
&\quad\qquad\qquad\left.
+L_{q,\omega}^{*\,n-2}\big(D_{q,\omega}^*\theta_2(\cdot;n)\big)
-\frac{d_{2n-1}d_{2n}}{d_{2n-2}}\big(L_{q,\omega}^{*\,n-2}\psi^{[n-1]}\big)\right\}
\end{align*}
and
\begin{align*}
B(x;n)&:=\left.\frac{qk_{n-1}^{-1}}{d_{2n-2}}\right\{
d_{2n-2}L_{q,\omega}^{*\,n-1}\big(\theta_2(\cdot;n)\big)
-\big(L_{q,\omega}^{*\,n-1}\psi^{[n-1]}\big)\times \\
&\quad\qquad\qquad\left. \rule{0pt}{1.7em}
\times\left(L_{q,\omega}^{*\,n-2}\big(D_{q,\omega}^*\theta_2(\cdot;n)\big)
-\frac{d_{2n-1}d_{2n}}{d_{2n-2}}\big(L_{q,\omega}^{*\,n-2}\psi^{[n-1]}\big)\right)\right\}\,.
\end{align*}
Now, taking into account (\ref{Rodqw9aa}) and (\ref{Rodqw9ab}), as well as the relations
\begin{align*}
L_{q,\omega}^{*\,n}\psi^{[n]}(x)&=q^{-n}d_{2n}x+e_n-\omega[n]_qq^{-n}d_{2n} \;,\\
L_{q,\omega}^{*\,n-2}\psi^{[n-1]}(x)&=q^{2-n}d_{2n-2}x+e_{n-1}-\omega[n-2]_qq^{2-n}d_{2n-2} \;,
\end{align*}
and also making use of the identities
$$
k_n^{-1}=\frac{q^{n-1}d_{n-1}}{d_{2n}d_{2n-1}}k_{n+1}^{-1}\;,\quad
k_{n-1}^{-1}=\frac{q^{2n-3}d_{n-1}d_{n-2}}{d_{2n}d_{2n-1}d_{2n-2}d_{2n-3}}k_{n+1}^{-1}\;,
$$
it is straightforward to verify that 
$$
A(x;n)=k_{n+1}^{-1}\frac{q^{n-1}d_{n-1}}{[n]_qd_{2n-2}}\big(x-\beta_n\big)\;,\quad
B(x;n)=-k_{n+1}^{-1}\frac{q^{n-1}d_{n-1}}{[n]_qd_{2n-2}}\gamma_{n} \;,
$$
$\beta_n$ and $\gamma_{n}$ being given by (\ref{EqDistC3-Dqw1})--(\ref{EqDistC3-Dqw2}).
Finally, replacing these expressions for $A(\cdot;n)$ and $B(\cdot;n)$ in the right-hand side of (\ref{AxnBxn}),
and taking into account (\ref{ttrrC1-Dqw}) and the identity
$$
1-\frac{q^{-1}[n-1]_{q^{-1}}}{[n]_{q^{-1}}}\frac{d_{2n-1}}{d_{2n-2}}=
\frac{q^{n-1}d_{n-1}}{[n]_qd_{2n-2}}\;,
$$
(\ref{Rodqw3}) follows.
\end{proof}

Lemma \ref{lemma-phipsiNOTzero} in bellow can be easily proved (see \cite[Lemma 3.1]{F1998}).

\begin{lemma}\label{lemma-phipsiNOTzero}
Let ${\bf u}\in\mathcal{P}^*$. Suppose that ${\bf u}$ is regular and fulfills $(\ref{2.1})$,
with $\phi\in\mathcal{P}_2$ and $\psi\in\mathcal{P}_1$.
If at least one of the polynomials $\phi$ and $\psi$ is not the zero polynomial,
then none of these polynomials can be the zero polynomial and, moreover, $\deg\psi=1$.
\end{lemma}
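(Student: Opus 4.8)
The plan is to reduce the statement to two elementary facts. The first is that a regular functional cannot be annihilated by a nonzero polynomial: if $(P_n)_{n\geq0}$ denotes a monic OPS with respect to ${\bf u}$ and $\pi\in\mathcal{P}$ satisfies $\pi{\bf u}={\bf 0}$, then, writing $k:=\deg\pi$ and letting $\pi_k\neq0$ be the leading coefficient of $\pi$, we get $0=\langle\pi{\bf u},P_k\rangle=\langle{\bf u},\pi P_k\rangle=\pi_k\langle{\bf u},P_k^2\rangle$, whence $\pi\equiv0$ since $\langle{\bf u},P_k^2\rangle\neq0$; in particular $u_0:=\langle{\bf u},1\rangle=\langle{\bf u},P_0^2\rangle\neq0$. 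The second fact is that ${\bf D}_{q,\omega}:\mathcal{P}^*\to\mathcal{P}^*$ is injective: by (\ref{Dqwxn}) one has $D_{q,\omega}^*x^n=q^{1-n}[n]_qx^{n-1}+(\mbox{\rm lower degree terms})$, and since $[n]_q\neq0$ for $n\geq1$ by (\ref{q-notexp}), an immediate induction shows that $D_{q,\omega}^*$ maps $\mathcal{P}$ onto $\mathcal{P}$; hence, if ${\bf D}_{q,\omega}{\bf v}={\bf 0}$, then $\langle{\bf v},D_{q,\omega}^*f\rangle=0$ for every $f\in\mathcal{P}$, i.e. $\langle{\bf v},g\rangle=0$ for every $g\in\mathcal{P}$, so ${\bf v}={\bf 0}$.

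Granting these, assume that at least one of $\phi$ and $\psi$ is not the zero polynomial. If $\phi\equiv0$, then (\ref{2.1}) becomes $\psi{\bf u}={\bf D}_{q,\omega}({\bf 0})={\bf 0}$, so the first fact forces $\psi\equiv0$, contradicting the hypothesis; hence $\phi\not\equiv0$. If $\psi\equiv0$, then (\ref{2.1}) becomes ${\bf D}_{q,\omega}(\phi{\bf u})={\bf 0}$, so injectivity of ${\bf D}_{q,\omega}$ yields $\phi{\bf u}={\bf 0}$, and the first fact again forces $\phi\equiv0$, a contradiction; hence $\psi\not\equiv0$. Thus neither $\phi$ nor $\psi$ is the zero polynomial.

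It remains to rule out $\deg\psi=0$. Since $\psi\not\equiv0$, write $\psi(x)=dx+e$ and suppose, for contradiction, that $d=0$, so that $e\neq0$. Pairing (\ref{2.1}) with the constant polynomial $f=1$ and using that a Hahn operator annihilates constants, $D_{q,\omega}^*1=0$, the left-hand side vanishes, $\langle{\bf D}_{q,\omega}(\phi{\bf u}),1\rangle=-q^{-1}\langle\phi{\bf u},D_{q,\omega}^*1\rangle=0$, whereas the right-hand side equals $\langle\psi{\bf u},1\rangle=e\,u_0\neq0$ by the first fact; this contradiction shows $\deg\psi=1$. The only steps that require any care are the surjectivity of $D_{q,\omega}^*$ --- which is exactly where hypothesis (\ref{q-notexp}) enters --- and the annihilation property of regular functionals; once these are in place, everything else is immediate, which is why this lemma can be disposed of in a few lines.
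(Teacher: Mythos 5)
Your proof is correct. Note that the paper itself does not supply an argument for this lemma --- it only remarks that it ``can be easily proved'' and points to \cite[Lemma 3.1]{F1998} --- so there is no in-paper proof to compare against; your write-up simply fills in the omitted details, and it does so along the lines one would expect. The three ingredients are all sound: (i) a regular functional is not annihilated by any nonzero polynomial (your expansion $\pi=\pi_kP_k+\cdots$ and pairing against $P_k$ is exactly right, and it also gives $u_0\neq0$); (ii) ${\bf D}_{q,\omega}$ is injective on $\mathcal{P}^*$ because $D_{q,\omega}^*$ lowers degree by exactly one with nonzero leading coefficient $q^{1-n}[n]_q$ (here $[n]_q\neq0$ for $n\geq1$ is precisely what the second condition in (\ref{q-notexp}) guarantees), hence is surjective from $\mathcal{P}_{n+1}$ onto $\mathcal{P}_n$ for every $n$; and (iii) testing (\ref{2.1}) against $f=1$ kills the left-hand side and leaves $e\,u_0$ on the right, ruling out $\deg\psi=0$. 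The only point worth flagging is cosmetic: when you invoke (\ref{Dqwxn}) for $D_{q,\omega}^*=D_{1/q,-\omega/q}$ you are implicitly using that the pair $(1/q,-\omega/q)$ again satisfies the first condition in (\ref{q-notexp}), which indeed follows from $q\neq0$; it would not hurt to say so.
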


The statement of the next lemma is given in \cite[Lemma 3.5]{F1998}.
We highlight that the proof of the $(q,\omega)-$admissibility condition is incorrect (see \cite[Lemma 3.5--(i)]{F1998}),
and so the proof therein may be regarded as incomplete.
For sake of completeness, we present a proof following the ideas presented in \cite{MP1994}.

\begin{lemma}\label{lemma-admissible}
Let ${\bf u}\in\mathcal{P}^*$. Suppose that ${\bf u}$ is regular
and satisfies $(\ref{2.1})$, where $\phi\in\mathcal{P}_2$,
$\psi\in\mathcal{P}_1$, and at least one of the polynomials $\phi$ and $\psi$ is not the zero polynomial.
Then $(\phi,\psi)$ is a $(q,\omega)-$admissible pair and ${\bf u}^{[k]}$ is regular for each $k\in\mathbb{N}$.
Moreover, if $(P_n)_{n\geq0}$ is the monic OPS with respect to ${\bf u}$, then
$\big(P_n^{[k]}\big)_{n\geq0}$ is the monic OPS with respect to ${\bf u}^{[k]}$.
\end{lemma}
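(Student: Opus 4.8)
The plan is to establish the case $k=1$ in full --- this is where the admissibility of $(\phi,\psi)$ gets forced --- and then to reach the general case by induction on $k$. By Lemma~\ref{lemma-phipsiNOTzero} we may assume throughout that $\phi\not\equiv0$, $\psi\not\equiv0$, and $\deg\psi=1$; in particular $d_0=d\neq0$. The workhorse is a summation-by-parts identity: since $(\ref{Dqw-RQ})$ holds with an arbitrary polynomial $g$ in place of $Q_n$, pairing it with $h\in\mathcal{P}$ and using $\langle{\bf D}_{q,\omega}^*{\bf w},h\rangle=-q\langle{\bf w},D_{q,\omega}h\rangle$ (which follows from Definition~\ref{def-Fqwu-Foup}) gives
\[
q\,\langle{\bf u}^{[1]},g\,D_{q,\omega}h\rangle=-\big\langle{\bf u},\big(\phi\,D_{q,\omega}^*g+q\,\psi g\big)h\big\rangle\qquad(g,h\in\mathcal{P})\,.
\]

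Taking $g=P_m^{[1]}$ and $h=P_{n+1}$, so that $D_{q,\omega}P_{n+1}=[n+1]_qP_n^{[1]}$ by $(\ref{def-Pnkmonic})$, and noting that $\deg(\phi\,D_{q,\omega}^*P_m^{[1]})\leq m+1$ and $\deg(\psi P_m^{[1]})\leq m+1$, the orthogonality of $(P_n)_{n\geq0}$ with respect to ${\bf u}$ kills the right-hand side when $m<n$; hence $\langle{\bf u}^{[1]},P_m^{[1]}P_n^{[1]}\rangle=0$ for $m<n$, and by symmetry for all $m\neq n$. For $m=n$ the coefficient of $x^{n+1}$ in $\phi\,D_{q,\omega}^*P_n^{[1]}+q\,\psi P_n^{[1]}$ equals $q^{1-n}d_n$ (using $[n]_{q^{-1}}=q^{1-n}[n]_q$ and $d_n=dq^n+a[n]_q$), and together with $\langle{\bf u},x^{n+1}P_{n+1}\rangle=\langle{\bf u},P_{n+1}^2\rangle$ this yields
\[
\langle{\bf u}^{[1]},(P_n^{[1]})^2\rangle=-\,\frac{q^{-n}d_n}{[n+1]_q}\,\langle{\bf u},P_{n+1}^2\rangle\qquad(n=0,1,\ldots)\,.
\]

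For the admissibility of $(\phi,\psi)$ I argue by contradiction. If $d_N=0$ for some $N$ (necessarily $N\geq1$), the two displays give $\langle{\bf u}^{[1]},P_m^{[1]}P_N^{[1]}\rangle=0$ for every $m\geq0$; as $(P_m^{[1]})_{m\geq0}$ is a simple set of polynomials, hence a basis of $\mathcal{P}$, this forces $P_N^{[1]}{\bf u}^{[1]}={\bf 0}$. Using ${\bf u}^{[1]}=L_{q,\omega}\phi\,{\bf L}_{q,\omega}{\bf u}$, the relations $(\ref{P4a})$, and the invertibility of ${\bf L}_{q,\omega}$ (cf. $(\ref{P2})$--$(\ref{P2a})$), this is equivalent to $\big(L_{q,\omega}^*P_N^{[1]}\big)\phi\,{\bf u}={\bf 0}$; but $\big(L_{q,\omega}^*P_N^{[1]}\big)\phi$ is a nonzero polynomial (here $\phi\not\equiv0$ is used), and a regular functional cannot be annihilated by multiplication by a nonzero polynomial $p$ --- expanding $p$ in the monic OPS and pairing with the OPS element of degree $\deg p$ gives a nonzero value. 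This contradiction shows $d_n\neq0$ for all $n$, i.e. $(\phi,\psi)$ is a $(q,\omega)$-admissible pair. The second display then gives $\langle{\bf u}^{[1]},(P_n^{[1]})^2\rangle\neq0$ for all $n$, so $(P_n^{[1]})_{n\geq0}$ is the monic OPS with respect to ${\bf u}^{[1]}$, which is therefore regular.

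For general $k$ I would induct on $k$, the case $k=0$ being the hypothesis and $k=1$ just settled. Assuming ${\bf u}^{[k-1]}$ is regular with monic OPS $(P_n^{[k-1]})_{n\geq0}$, relations $(\ref{feq-unk})$ and $(\ref{explicit-psik})$ show that ${\bf u}^{[k-1]}$ satisfies ${\bf D}_{q,\omega}(\phi{\bf u}^{[k-1]})=\psi^{[k-1]}{\bf u}^{[k-1]}$ with $\psi^{[k-1]}=d_{2k-2}\,x+e_{k-1}$, of degree $1$ since $d_{2k-2}\neq0$ by the admissibility already proved; as $\phi\not\equiv0$ too, the previous paragraph applies verbatim to the triple $({\bf u}^{[k-1]},\phi,\psi^{[k-1]})$ and shows that $({\bf u}^{[k-1]})^{[1]}$ is regular with monic OPS $((P_n^{[k-1]})^{[1]})_{n\geq0}$. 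Since $({\bf u}^{[k-1]})^{[1]}={\bf u}^{[k]}$ by $(\ref{def-unk})$ and $(P_n^{[k-1]})^{[1]}=P_n^{[k]}$ directly from $(\ref{def-Pnkmonic})$, the induction closes. I expect the main obstacles to be the leading-coefficient bookkeeping behind the exact factor $q^{-n}d_n$ in the norm formula --- which is what ties the admissibility to the regularity of ${\bf u}$ --- and, conceptually, the transport of $P_N^{[1]}{\bf u}^{[1]}={\bf 0}$ through ${\bf L}_{q,\omega}$ into a statement that contradicts the regularity of ${\bf u}$.
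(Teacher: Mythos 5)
Your proof is correct and follows the same overall route as the paper's: both rest on the identity of Lemma~\ref{lemmaRn+1} (your summation-by-parts display is exactly $(\ref{Dqw-RQ})$ paired against $P_{n+1}$), both extract the orthogonality and the norm relation $\langle{\bf u}^{[1]},(P_n^{[1]})^2\rangle=-q^{-n}d_n[n+1]_q^{-1}\langle{\bf u},P_{n+1}^2\rangle$ (the paper's $(\ref{ortogDqwPn})$), and both pass from $k=1$ to general $k$ by iterating via $(\ref{feq-unk})$ and $(\ref{explicit-psik})$, using $({\bf u}^{[k-1]})^{[1]}={\bf u}^{[k]}$ and $d_n^{[k-1]}=d_{n+2k-2}$. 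The one genuine divergence is the admissibility step. The paper proves $d_n\neq0$ directly: since $\phi\not\equiv0$, the polynomial $\phi\,L_{q,\omega}^*P_n^{[1]}$ has exact degree $n+s$ with $s=\deg\phi$, so $0\neq\langle{\bf u},\phi(L_{q,\omega}^*P_n^{[1]})P_{n+s}\rangle=q\langle{\bf u}^{[1]},P_n^{[1]}L_{q,\omega}P_{n+s}\rangle$, and expanding $L_{q,\omega}P_{n+s}$ in the basis $(P_m^{[1]})_m$ turns the right-hand side into a nonzero multiple of $d_n$. You instead argue by contradiction: $d_N=0$ would make $P_N^{[1]}{\bf u}^{[1]}$ vanish against the whole basis $(P_m^{[1]})_m$, hence $P_N^{[1]}{\bf u}^{[1]}={\bf 0}$, which transported back through the invertible ${\bf L}_{q,\omega}$ gives $(L_{q,\omega}^*P_N^{[1]})\phi\,{\bf u}={\bf 0}$ --- impossible for a regular ${\bf u}$ and a nonzero polynomial. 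The two arguments are equivalent in substance (both hinge on $\phi\not\equiv0$, supplied by Lemma~\ref{lemma-phipsiNOTzero}, and on the nondegeneracy of a regular functional), but yours isolates the clean general principle that a regular functional is annihilated by no nonzero polynomial, at the cost of one extra transport through ${\bf L}_{q,\omega}$. A further small point in your favour: by orienting the identity so that the degree-$(m+1)$ polynomial $\phi D_{q,\omega}^*P_m^{[1]}+q\psi P_m^{[1]}$ is tested against $P_{n+1}$, the vanishing for $m<n$ is immediate from the orthogonality of $(P_n)_n$, whereas testing $R_{n+1}$ against $P_{m+1}$ (as in the paper's display) requires invoking the symmetry of $\langle{\bf u}^{[1]},Q_nQ_m\rangle$ to settle the case $m<n$.
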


\begin{proof}
We start by considering the case $k=1$.
Set $Q_n:=P_n^{[1]}=D_{q,\omega}P_{n+1}/[n+1]_q$ and
let $R_{n+1}$ be the corresponding polynomial defined by (\ref{def-Rn+1}).
Fix arbitrarily $m,n\in\mathbb{N}_0$, with $m\leq n$.
Then, by Lemma \ref{lemmaRn+1},
\begin{align*}
[m+1]_q\big\langle {\bf u}^{[1]}, Q_nQ_m \big\rangle
& =-\big\langle  {\bm D}_{q,\omega}^*\big(Q_n {\bf u}^{[1]}\big), P_{m+1}\big\rangle
=-q^{-1}\langle R_{n+1}{\bf u}, P_{m+1}\rangle \\
& =-q^{-n}d_n\langle {\bf u}, P_{n+1}^2\rangle\delta_{m,n}\;,
\end{align*}
hence we obtain
\begin{equation}\label{ortogDqwPn}
\big\langle {\bf u}^{[1]}, P_n^{[1]}P_m^{[1]} \big\rangle=-\frac{q^{-n} d_n}{[n+1]_q} \langle {\bf u}, P_{n+1}^2\rangle \delta_{m,n}\quad(m,n=0,1,\ldots)\;.
\end{equation}
Next, let $s:=\deg\phi\in\{0,1,2\}$. Then
\begin{equation}\label{qqq1}
0\neq\big\langle {\bf u}, \phi\big(L_{q,\omega}^*P_n^{[1]}\big)P_{n+s}\big\rangle
=\big\langle \phi{\bf u},L_{q,\omega}^* \big(P_n^{[1]}L_{q,\omega}P_{n+s}\big)\big\rangle
= q\big\langle {\bf u}^{[1]},P_n^{[1]}L_{q,\omega}P_{n+s} \rangle \;.
\end{equation}
Since $L_{q,\omega}P_{n+s}(x)=\sum_{m=0}^{n+s}c_{n,m}P_m^{[1]}(x)$ for some coefficients $c_{n,m}\equiv c_{n,m}(s;q,\omega)\in\mathbb{C}$, from (\ref{ortogDqwPn}) and (\ref{qqq1}) we deduce
\begin{equation}\label{qqq2}
0\neq\sum_{m=0}^{n+s}c_{n,m} \big\langle {\bf u}^{[1]}, P_n^{[1]}P_m^{[1]} \big\rangle=-\frac{q^{-n} d_n c_{n,n}}{[n+1]_q} \langle {\bf u}, P_{n+1}^2\rangle \quad(n=0,1,\ldots)\;.
\end{equation}
This implies $d_n\neq0$ (and also $c_{n,n}\neq0$) for each $n=0,1,\ldots$, which means that $(\phi,\psi)$ is a $(q,\omega)-$admissible pair. Thus, it follows from (\ref{ortogDqwPn}) that
$\big(P_n^{[1]}\big)_{n\geq0}$ is a monic OPS with respect to ${\bf u}^{[1]}$.
This proves the last statement in the theorem for $k=1$.
Now, by (\ref{feq-unk}), ${\bf u}^{[1]}$ fulfills
${\bf D}_{q,\omega}\big(\phi{\bf u}^{[1]}\big)=\psi^{[1]}{\bf u}^{[1]}$, hence, since
$P_n^{[2]}=D_{q,\omega}P_{n+1}^{[1]}/[n+1]_q$ and, by (\ref{explicit-psik}), $\psi^{[1]}(x)=d_2x+e_1$,
from (\ref{ortogDqwPn}) with ${\bf u}$, $\psi$, and $(P_n)_{n\geq0}$
replaced (respectively) by ${\bf u}^{[1]}$, $\psi^{[1]}$, and
$(P_n^{[1]})_{n\geq0}$, we deduce, for every $n,m\in\mathbb{N}_0$,
$$
\langle {\bf u}^{[2]} , P_n^{[2]} P_m^{[2]} \rangle
=-\frac{q^{-n}d_n^{[1]}}{[n+1]_q} \langle {\bf u}^{[1]} , \big(P_{n+1}^{[1]}\big)^2 \rangle \delta_{nm}
$$
where $d_n^{[1]}$ is defined as in (\ref{qw-admiss-cond})
corresponding to the pair $(\phi,\psi^{[1]})$, so that
$$
d_n^{[1]}:=\big(\psi^{[1]}\big)'\,q^n+\mbox{$\frac12$}\phi^{''}\,[n]_{q}=d_2q^n+a[n]_q=d_{n+2}\;.
$$
Therefore, and taking into account once again (\ref{ortogDqwPn}), we obtain
$$
\langle{\bf u}^{[2]},P_n^{[2]}P_m^{[2]}\rangle
=q^{-(2n+1)}\frac{d_{n+1}d_{n+2}}{[n+1]_q[n+2]_q}\langle{\bf u},P_{n+2}^2\rangle\delta_{nm}\quad(n,m\in\mathbb{N}_0)\;,
$$
and so $\{P_n^{[2]}\}_{n\geq0}$ is a monic OPS with respect to ${\bf u}^{[2]}$.
Arguing by induction, we prove
\begin{equation}\label{NuPaa}
\langle {\bf u}^{[k]} , P_n^{[k]} P_m^{[k]} \rangle
=(-1)^kq^{-k(2n+k-1)/2}\Big(\prod_{j=1}^{k}\frac{d_{n+k+j-2}}{[n+j]_q}\Big)\langle{\bf u},P_{n+k}^2\rangle\delta_{nm}
\quad (k,n,m\in\mathbb{N}_0)\;,
\end{equation}
hence $\{P_n^{[k]}\}_{n\geq0}$ is a monic OPS with respect to
${\bf u}^{[k]}$, for each $k\in\mathbb{N}_0$.
\end{proof}

\subsection{Proof of Theorem \ref{Dqw-main-Thm}}

Suppose that ${\bf u}$ is regular. Fix $n\in\mathbb{N}_0$.
Since ${\bf u}$ satisfies (\ref{EqDistC1-Dqw}),
Lemma \ref{lemma-admissible} ensures that
$(\phi,\psi)$ is a $(q,\omega)-$admissible pair, and so $d_n\neq0$.
Moreover, ${\bf u}^{[n]}$ is regular and $\big(P_j^{[n]}\big)_{j\geq0}$
is the corresponding monic OPS, which fulfills a three-term recurrence relation: 
\begin{equation}\label{ttrrC1}
P_{j+1}^{[n]}(x)=(x-\beta_j^{[n]})P_j^{[n]}(x)-\gamma_j^{[n]}P_{j-1}^{[n]}(x)\quad  (j=0,1,\ldots)\;,
\end{equation}
where $P_{-1}^{[n]}(x)=0$, being $\beta_{j}^{[n]}\in\mathbb{C}$
and $\gamma_{j}^{[n]}\in\mathbb{C}\setminus\{0\}$ for each $j$.
Let us compute $\gamma_{1}^{[n]}$. 
We first show that (for $n=0$) the coefficient $\gamma_1\equiv\gamma_{1}^{[0]}$,
appearing in the three-term recurrence relation for $\{P_j\}_{j\geq0}$,
is given by
\begin{equation}\label{g1}
\gamma_1=-\frac{1}{dq+a}\,\phi\left(-\frac{e}{d}\right) \; .
\end{equation}
This may be proved taking  $n=0$ and $n=1$ in the relation
$\langle{\bf D}_{q,\omega}(\phi{\bf u}),x^n\rangle=\langle\psi{\bf u},x^n\rangle$.
Indeed, setting $u_n:=\langle{\bf u},x^n\rangle$, for $n=0$ we obtain $0=du_1+eu_0$, and for $n=1$ we find
$-q^{-1}(au_2+bu_1+cu_0)=du_2+eu_1$. Therefore,
\begin{equation}
u_1=-\frac{e}{d}u_0 \; , \quad u_2=-\frac{1}{dq+a}\left[-(qe+b)\frac{e}{d}+c \right] u_0 \; .\label{u1}
\end{equation}
On the other hand, since $P_1(x)=x-\beta_0=x-u_1/u_0$, we also have
\begin{equation}
\gamma_1=\frac{\langle{\bf u},P_1^2\rangle}{u_0}=\frac{u_2u_0-u_1^2}{u_0^2}
=\frac{u_2}{u_0}-\left(\frac{u_1}{u_0}\right)^2 \; .
\label{u2}
\end{equation}
Substituting $u_1$ and $u_2$ given by (\ref{u1}) into (\ref{u2}) yields (\ref{g1}).
Now, since equation (\ref{feq-unk}) is of the same type as
(\ref{EqDistC1-Dqw}), with the same polynomial $\phi$ and being $\psi$ replaced by $\psi^{[n]}$,
we see that $\gamma_{1}^{[n]}$ may be obtained
replacing in (\ref{g1}) the coefficients $d$ and $e$ of $\psi(x)=dx+e$
by the corresponding coefficients of $\psi^{[n]}(x)=d_{2n}x+e_n$. Hence,
\begin{equation}\label{g1n}
\gamma_{1}^{[n]}=-\frac{1}{d_{2n}q+a}\phi\left(-\frac{e_n}{d_{2n}}\right)
=-\frac{1}{d_{2n+1}}\phi\left(-\frac{e_n}{d_{2n}}\right) \;.
\end{equation}
Since ${\bf u}^{[n]}$ is regular, then $\gamma_{1}^{[n]}\neq0$,
hence $\phi\left(-\frac{e_n}{d_{2n}}\right)\neq0$.
Thus, (\ref{EqDistC2-Dqw}) holds.

Conversely, suppose that (\ref{EqDistC2-Dqw}) holds.
Then, by Favard's theorem,
the sequence $(P_n)_{n\geq0}$ defined by the
three-term recurrence relation (\ref{ttrrC1-Dqw})--$(\ref{EqDistC3-Dqw2})$ is a monic OPS.
We claim that $\{P_n\}_{n\geq0}$ is an OPS with respect to ${\bf u}$.
To prove this sentence we only need to show that
(see e.g. \cite[Chapter I, Exercise 4.14]{C1978} or \cite[Corollary 6.2]{P2018})
\begin{equation}\label{qq}
\langle{\bf u},1\rangle\neq0 \;,\quad \langle{\bf u},P_n\rangle=0 \quad (n=1,2,\ldots)\;.
\end{equation}
Suppose that $\langle{\bf u},1\rangle=0$.
Since the functional equation (\ref{EqDistC1-Dqw}) is equivalent to
the second order difference equation (\ref{uDqwxn})
fulfilled by the moments $y_n:=\langle{\bf u},Y_n\rangle$, 
and noting that for $n=0$ (\ref{uDqwxn}) yields $dy_1+ey_0=0$,
we get $y_1=0$ (because $y_0=\langle{\bf u},1\rangle=0$ and $d=d_0\neq0$);
hence $y_0=y_1=0$ and so it follows recurrently from (\ref{uDqwxn})
that $y_n=0$ for each $n\in\mathbb{N}_0$. Therefore ${\bf u}=\textbf{0}$,
in contradiction with the hypothesis. Thus, $\langle{\bf u},1\rangle\neq0$.
On the other hand, by Lemma \ref{lemmaRodFunctional}, for each $n\geq1$ we may write
$$\langle{\bf u},P_n\rangle=\langle P_n{\bf u},1\rangle
=-qk_n\big\langle{\bf D}_{1/q,-\omega/q}^{n-1}{\bf u}^{[n]},D_{q,\omega}1\big\rangle
=0\;.$$
Thus (\ref{qq}) is proved,
hence ${\bf u}$ is regular and $(P_n)_{n\geq0}$ is the corresponding monic OPS.
Finally, the Rodrigues-type formula (\ref{EqDistRod-Dqw})
follows from Lemma \ref{lemmaRodFunctional} and (\ref{repres-unk}),
concluding the proof of Theorem \ref{Dqw-main-Thm}.

\begin{remark}
Since $-e_n/d_{2n}$ is the unique zero of $\psi^{[n]}(x)=d_{2n}x+e_n$,
the regularity conditions (\ref{EqDistC2-Dqw}) for ${\bf u}$ given in Theorem \ref{Dqw-main-Thm}
may be restated as follows:
{\it ${\bf u}$ is regular if and only if
$(\phi,\psi)$ is a $(q,\omega)-$admissibe pair and $\psi^{[n]}\nmid\phi$ for each $n=0,1,\ldots$.}
Thus, comparing with \cite[Theorem 1.4]{H1993}, we see once again that it is advantageous to define
${\bf D}_{q,\omega}$ as in Definition \ref{def-Fqwu-Foup}--(iii).
\end{remark}

\begin{remark}
It may seems somehow intricate the way how formulas (\ref{EqDistC3-Dqw1}) and (\ref{EqDistC3-Dqw2})
appear on the course of the proof of Theorem \ref{Dqw-main-Thm}. In fact, they were given
in the proof of the sufficiency of the condition,
hence without assuming {\it a priori} the regularity of ${\bf u}$
(as a matter of fact, they were used to prove the regularity of ${\bf u}$).
Assuming the regularity of ${\bf u}$, there is a more transparent way to obtain those formulas.
Indeed, going back to the end of the proof of the necessity of the condition on Theorem \ref{Dqw-main-Thm},
we may deduce (\ref{EqDistC3-Dqw1}) and (\ref{EqDistC3-Dqw2}) as follows.
First, from (\ref{NuPaa}), we may write
$$
\gamma_j^{[n]}=\frac{\big\langle{\bf u}^{[n]},\big(P_{j}^{[n]}\big)^2\big\rangle}{\big\langle{\bf u}^{[n]},\big(P_{j-1}^{[n]}\big)^2\big\rangle}
=\frac{q^{-n}[j]_qd_{j+2n-2}}{[j+n]_qd_{j+n-2}}
\frac{\langle{\bf u},P_{j+n}^2\rangle}{\langle{\bf u},P_{j+n-1}^2\rangle}
=\frac{q^{-n}[j]_qd_{j+2n-2}}{[j+n]_qd_{j+n-2}}\gamma_{j+n}
$$
for every $j=1,2,\ldots$ and $n=0,1,\ldots$.
Taking $j=1$ and using (\ref{g1n}), we obtain
$$
\gamma_{n+1}=\frac{q^{n}[n+1]_qd_{n-1}}{d_{2n-1}}\gamma_1^{[n]}
=-\frac{q^{n}[n+1]_qd_{n-1}}{d_{2n-1}d_{2n+1}}\phi\left(-\frac{e_n}{d_{2n}}\right)\;.
$$
This proves (\ref{EqDistC3-Dqw2}).
To prove (\ref{EqDistC3-Dqw1}), set
$$
P_n^{[k]}(x)=x^n+t_n^{[k]}x^{n-1}+(\mbox{\rm lower degree terms})\;,
$$
for each $k=0,1,\ldots$.
It is well known (see e.g. \cite[Theorem 4.2-(d)]{C1978}) that
$$
t_n^{[k]}=-\sum_{j=0}^{n-1}\beta_j^{[k]}\quad(k=0,1,\ldots;\;n=1,2,\ldots)\;.
$$
Using (\ref{Dqwxn}), and recalling that $P_n^{[0]}=P_n$, we deduce
\begin{align*}
D_{q,\omega}P_{n+1}(x) & = D_{q,\omega}(x^{n+1})+t_{n+1}^{[0]}D_{q,\omega}(x^n)+(\mbox{\rm lower degree terms})\\
& =[n+1]_qx^n+\left\{\big((n+1)[n]_q-n[n+1]_q\big)\omega_0+t_{n+1}^{[0]}[n]_q\right\}x^{n-1} \\
& \quad +(\mbox{\rm lower degree terms})\;,
\end{align*}
hence, since $P_n^{[1]}(x):=D_{q,\omega}P_{n+1}(x)/[n+1]_q$, we obtain
$$
t_n^{[1]} = \Big(\frac{(n+1)[n]_q}{[n+1]_q}-n\Big)\omega_0+t_{n+1}^{[0]}\frac{[n]_q}{[n+1]_q}\quad(n=1,2,\ldots)\;.
$$
Rewrite this equality as
$$
\frac{t_{n+1}^{[0]}+(n+1)\omega_0}{[n+1]_q}=\frac{t_n^{[1]}+n\omega_0}{[n]_q}\quad(n=1,2,\ldots)\;.
$$
Applying successively this relation, yields
$$
\frac{t_{n+1}^{[0]}+(n+1)\omega_0}{[n+1]_q}=\frac{t_1^{[n]}+1\cdot\omega_0}{[1]_q}
=-\beta_0^{[n]}+\omega_0\quad(n=1,2,\ldots)\;,
$$
hence
$$
t_{n+1}^{[0]}=\big([n+1]_q-(n+1)\big)\omega_0-[n+1]_q\beta_0^{[n]}\quad(n=0,1,\ldots)\;.
$$
(Note that this equality is trivial if $n=0$.)
Therefore,
$$
\beta_n=\beta_n^{[0]}=t_{n}^{[0]}-t_{n+1}^{[0]}
=\big([n]_q-[n+1]_q+1\big)\omega_0+[n]_q\beta_0^{[n-1]}-[n+1]_q\beta_0^{[n]}\;.
$$
This proves (\ref{EqDistC3-Dqw1}), since $\beta_0=u_1/u_0=-e/d$, hence $\beta_0^{[n]}=-e_n/d_{2n}$,
and taking into account that $([n]_q-[n+1]_q+1)\omega_0=[n]_q\omega$.
\end{remark}

\begin{remark}\label{Rmk7}
Suppose that ${\bf u}\in\mathcal{P}^*$ is regular and satisfies the functional equation (\ref{EqDistC1-Dqw}).
Then the Rodrigues-type formula (\ref{EqDistRod-Dqw}) is a simple consequence of the relation between the dual basis
$({\bf a}_n)_{n\geq0}$ and $\big({\bf a}_n^{[k]}\big)_{n\geq0}$
associated to the monic OPS $(P_n)_{n\geq0}$ and $(P_n^{[k]})_{n\geq0}$ ($k=0,1,\ldots$), respectively.
To see why this holds we first observe that until now we only have made use of the space $\mathcal{P}^*$,
the algebraic dual of $\mathcal{P}$.
Consider now $\mathcal{P}$ endowed with the strict inductive limit topology induced by the
spaces $\mathcal{P}_n$ ($n=0,1,\ldots$), each $\mathcal{P}_n$ being regarded as a finite dimensional normed space.
Then, denoting by $\mathcal{P}'$ the topological dual of $\mathcal{P}$,
the equality $\mathcal{P}^*=\mathcal{P}'$ holds (see e.g. \cite{M1991,P2018}).
As a consequence, we may write (in the sense of the weak dual topology in $\mathcal{P}'$):
$$
{\bf D}_{1/q,-\omega/q}^k\big({\bf a}_n^{[k]}\big)
=\sum_{j=0}^\infty\langle{\bf D}_{1/q,-\omega/q}^k\big({\bf a}_n^{[k]}\big),P_j\rangle{\bf a}_j
\quad(n=0,1,\ldots)\;.
$$
Since $\langle{\bf D}_{1/q,-\omega/q}^k\big({\bf a}_n^{[k]}\big),P_j\rangle=0$ if $j<k$ and,
if $j\geq k$,
$$
\langle{\bf D}_{1/q,-\omega/q}^k\big({\bf a}_n^{[k]}\big),P_j\rangle
=(-q)^k\langle{\bf a}_n^{[k]},D_{q,\omega}^kP_j\rangle
=(-q)^k\frac{[j]_q!}{[j-k]_q!}\langle{\bf a}_n^{[k]},P_{j-k}^{[k]}\rangle\;,
$$
we deduce
$$
{\bf D}_{1/q,-\omega/q}^k\big({\bf a}_n^{[k]}\big)
=(-q)^k\,\frac{[n+k]_q!}{[n]_q!}\,{\bf a}_{n+k}\quad(n,k=0,1,\ldots)\;.
$$
Taking $n=0$ and then replacing $k$ by $n$, we obtain
$$
{\bf D}_{1/q,-\omega/q}^n\big({\bf a}_0^{[n]}\big)
=(-q)^n[n]_q!{\bf a}_{n}\quad(n=0,1,\ldots)\;.
$$
Therefore, since
${\bf a}_0^{[n]}=\langle{\bf u}^{[n]},1\rangle^{-1}{\bf u}^{[n]}$ and
${\bf a}_{n}=\langle{\bf u},P_n^2\rangle^{-1}P_n{\bf u}$ (see \cite{M1991,M1994}), we deduce
$$
{\bf D}_{1/q,-\omega/q}^n\big({\bf u}^{[n]}\big)
=(-q)^n[n]_q!\frac{\langle{\bf u}^{[n]},1\rangle}{\langle{\bf u},P_n^2\rangle} P_{n}{\bf u}\quad(n=0,1,\ldots)\;.
$$
Finally, taking into account (\ref{repres-unk}) and (\ref{NuPaa}), (\ref{EqDistRod-Dqw}) follows.
\end{remark}

\section*{Acknowledgements}

The authors thank the Stuttgart University Library for kindly sending them a
hard copy of H\"acker's Ph.D. Thesis \cite{H1993} together with the preprint \cite{H1993a}.

\section*{Disclosure statement}

No potential conflict of interest was reported by the authors.

\section*{Funding}

RAN is supported by Ministerio de Econom\'{\i}a y Competitividad of Spain
through MTM2015-65888-C4-1-P, Junta de Andaluc\'ia through FQM-262, and Feder Funds (European Union).
KC, DM, and JP are partially supported by the Centre for Mathematics of the University of Coimbra -- UID/MAT/00324/2019, funded by the Portuguese Government through FCT/MEC and co-funded by the European Regional Development Fund through the Partnership Agreement PT2020. DM is also supported by the FCT grant PD/BD/135295/2017.


\begin{thebibliography}{99}

\bibitem{M1991}
Maroni~P.
Une th\'eorie alg\'ebrique des polyn\^{o}mes orthogonaux. Application aux polyn\^{o}mes orthogonaux semi-classiques.
In Orthogonal Polynomials and Their Applications, Brezinski~C, Gori~L, and Ronveaux~A, Eds.
IMACS Annals Comput Appl Math. 1991;9:95--130.

\bibitem{M1993}
Maroni~P.
Variations around classical orthogonal polynomials. Connected problems.
In Proceedings of the Seventh Spanish Symposium on Orthogonal Polynomials and Applications (VII SPOA) (Granada, 1991).
J Comput Appl Math. 1993;48(1--2):133--155.

\bibitem{M1994}
Maroni~P.
Fonctions eul\'eriennes. Polyn\^omes orthogonaux classiques. (French)
T\'echniques de l'Ing\'enieur, trait\'e G\'en\'eralit\'es (Sciences Fondamentales), A. 1994;154:1--30.

\bibitem{C1978}
Chihara~TS.
An introduction to orthogonal polynomials.
Gordon and Breach, New York; 1978.

\bibitem{I2005}
Ismail~MEH.
Classical and quantum orthogonal polynomials in one variable. With two chapters by W. Van Assche.
With a foreword by R. Askey. Encyclopedia of Mathematics and its Applications 98, Cambridge:
Cambridge University Press; 2005.

\bibitem{NSU1991}
Nikiforov~AF, Suslov~SK, Uvarov~VB.
Classical orthogonal polynomials of a discrete variable. Translated from the Russian.
Springer Series in Computational Physics. Springer-Verlag, Berlin; 1991.

\bibitem{KLS2010}
Koekoek~R, Lesky~PA, Swarttouw~RF.
Hypergeometric orthogonal polynomials and their $q-$Analogues.
Springer Monographs in Mathematics.
Springer-Verlag, Berlin; 2010.

\bibitem{P2018}
Petronilho~J.
Orthogonal Polynomials and Special Functions
[Class notes for a course given in the UC$|$UP Joint PhD Program in Mathematics].
Dep Math, Univ Coimbra. 2018;147 pp.

\bibitem{MP1994}
Marcell\'an~F, Petronilho~J.
On the solution of some distributional differential equations: existence and characterizations of the classical moment functionals.
Integral Transforms Spec Funct. 1994;2(3):185--218.

\bibitem{H1949}
Hahn~W.
\"{U}ber Orthogonalpolynome, die $q-$Differenzengleichungen gen\"gen. (German) Math Nachr. 1949;2:4--34.

\bibitem{F1998}
Fopouagnigni~M.
Laguerre-Hahn orthogonal polynomials with respect to the Hahn operator: fourth-order difference equation for $r-$th associated and the Laguerre-Freud equations for the recurrence coefficients
[PhD thesis]. Universit\'e de B\'enin; 1998.

\bibitem{H1993}
H\"acker~S.
Polynomiale Eigenwertprobleme zweiter Ordnung mit Hahnschen $q-$Operatoren
[PhD thesis]. Universit\"at Stuttgart; 1993.

\bibitem{H1993a}
H\"acker~S.
On Hahn class polynomials.
Mathematisches Institut A der Universit\"at Stuttgart. Preprint 93-5.; 1993.

\bibitem{MA-NM2001}
Medem~JC, \'Alvarez-Nodarse~R, Marcell\'an~F.
On the $q-$polynomials: a distributional study.
J Comput Appl Math. 2001;135(2):157--196.

\bibitem{S1995}
Salto~L.
Polinomios $D_\omega-$semiclassicos
[PhD thesis]. Universidad de Alcal\'a de Henares; 1995.

\bibitem{P1993}
Petronilho~J.
Polin\'omios Ortogonais e Funcionais Semicl\'assicas (Portuguease)
[Orthogonal Polynomials and Semiclassical Functionals] [Master's thesis].
Univ Coimbra; 1993.

\bibitem{CP2015}
Cardoso~JL, Petronilho~J.
Variations around Jackson's quantum operator.
Methods Appl Anal. 2015;22(4):343--358.

\end{thebibliography}
\end{document}